\theoremstyle{plain}
\newtheorem{thm}{Theorem}[section]
\newtheorem{lem}[thm]{Lemma}
\newtheorem{cor}[thm]{Corollary}
\theoremstyle{definition}
\newtheorem{dfns-rems}[thm]{Definitions and Remarks}
\newtheorem{notas-rems}[thm]{Notations and Remarks}
\newtheorem{exmps-rems}[thm]{Examples and Remarks}
\newtheorem{obs}[thm]{Observation}
\begin{document}


\title[Signed graphs cospectral with the path]{Signed graphs cospectral with the path}


\author[S. Akbari, W.H. Haemers, H.R. Maimani and L. Parsaei Majd]{Saieed Akbari}

\address{S. Akbari, Department of Mathematical Sciences, Sharif University of Technology, Tehran, Iran.}
\email{s$_{-}$akbari@sharif.ir}

\author[]{Willem H. Haemers}
\address{W.H. Haemers, Department of Econometrics and Operations Research,
Tilburg University, Tilburg, The Netherlands.}
\email{haemers@uvt.nl}

\author[]{Hamid Reza Maimani}
\address{H.R. Maimani, Mathematics Section, Department of Basic Sciences,
Shahid Rajaee Teacher Training University, P.O. Box 16785-163, Tehran,
Iran.}
\email{maimani@ipm.ir}

\author[]{Leila Parsaei Majd}
\address{L. Parsaei Majd, Mathematics Section, Department of Basic Sciences,
Shahid Rajaee Teacher Training University, P.O. Box 16785-163, Tehran,
Iran, and School of Mathematics, Institute for Research in Fundamental
Sciences (IPM), P.O. Box 19395-5746, Tehran, Iran.}
\email{leila.parsaei84@yahoo.com}


\begin{abstract}
A signed graph $\Gamma$ is said to be determined by its spectrum if every signed graph with the same spectrum
as $\Gamma$ is switching isomorphic with $\Gamma$.
Here it is proved that the path $P_n$, interpreted as a signed graph, is determined by its spectrum
if and only if $n\equiv 0, 1$, or 2 (mod 4), unless $n\in\{8, 13, 14, 17, 29\}$, or $n=3$.
\\
Keywords: signed graph; path; spectral characterization; cospectral graphs.
\\
AMS subject classification 05C50, 05C22.
\end{abstract}

\maketitle


\section{Introduction} \label{sec1}

Throughout this paper all graphs are simple, without loops or parallel edges.
A \textit{signed graph} $\Gamma=(G, \sigma)$ (with $G=(V, E)$) is a graph with the vertex set $V$ and the edge set $E$ together
with a function $\sigma : E \rightarrow \{-1, +1\}$, called the \textit{signature function}.
So, every edge becomes either positive or negative.
The adjacency matrix $A$ of $\Gamma$ is obtained from the adjacency matrix of the underlying graph $G$,
by replacing $1$ by $-1$ whenever the corresponding edge is negative.
The spectrum of $A$ is also called the spectrum of the signed graph $\Gamma$.
For a vertex subset $X$ of $\Gamma$, the operation that changes the sign of all outgoing edges of $X$, is called switching.
In terms of the matrix $A$, switching multiplies the rows and columns of $A$ corresponding to $X$ by $-1$.
The switching operation gives rise to an equivalence relation, and equivalent signed graphs have the same spectrum
(see \cite[Proposition 3.2]{zaslavsky1}).
If a signed graphs can be switched into an isomorphic copy of another signed graph, the two signed graphs are called
\textit{switching isomorphic}.
Clearly switching isomorphic graphs are cospectral (that is, they have the same spectrum).
A signed graph $\Gamma$ is determined by spectrum whenever every graph cospectral with $\Gamma$ is switching isomorphic with $\Gamma$.
For unsigned graphs it is known that the path $P_n$ is determined by the spectrum of the adjacency matrix,
see \cite[Proposition~1]{dam-hae}.
Among the signed graphs this is in general not true anymore.
In this paper we determine precisely for which $n$ this is still the case, see Theorems~\ref{n-even}, \ref{1nod4}, and
Corollary~\ref{3mod4}.

We refer to \cite{zaslavsky1} and \cite{zaslavsky2} for more information about signed graphs.
For the relevant background on graphs we refer to \cite{brou-haem}, \cite{cvet1}, or \cite{cvet}.
The initial problem was, possibly, first introduced by Acharya in \cite{[1]}.

\section{preliminaries}

A \textit{walk} of length $k$ in a signed graph $\Gamma$ is a
sequence $v_1 e_1 v_2 e_2 \ldots v_k e_k v_{k+1}$ of vertices $v_1, v_2, \ldots, v_{k+1}$ and edges $e_1, e_2, \ldots, e_{k}$
such that $v_i\neq v_{i+1}$ and $e_i=\{v_i,v_{i+1}\}$ for each $i = 1, 2, \ldots, k$.
A walk is said to be \textit{positive} if it contains an even number of negative edges,
otherwise it is called \textit{negative}.
Let $w_{ij}^{+}(k)$ (resp. $w_{ij}^{-}(k)$) denote the number of positive (resp., negative) walks of length $k$ from the
vertex $v_i$ to the vertex $v_j$.
A closed walk is a walk that starts and ends at the same vertex.

In the unsigned case, the $(i, j)$-entry of $A^{k}$ represents the number of walks of length $k$ from $v_i$ to $v_j$.
But in the signed case, powers of $A$ count walks in a signed way.
The $(i, j)$-entry of $A^{k}$ is $w_{ij}^{+}(k) - w_{ij}^{-}(k)$ (\cite[Lemma 3.2]{belardo}, \cite[Theorem II.1]{zaslavsky2}).
For simplicity, we set $W_{k}(\Gamma)=\sum_{i=1}^{n}{(w_{ii}^{+}(k) - w_{ii}^{-}(k))}$.
It is easy to see that if $\Gamma$ and $\Gamma^{'}$ are two cospectral signed graphs,
then $W_k(\Gamma)=W_k(\Gamma^{'})$ for each $k\geqslant 1$.
Moreover, if $\Gamma$ and $\Gamma^{'}$ are two cospectral signed graphs since the sum of the squares of the eigenvalues is twice of the number of edges, we obtain that the order and the size of $\Gamma$ and $\Gamma^{'}$ are the same.

The following lemma can be easily proved by induction.
\begin{lem}\label{walk}
~\\[-27pt]
$$W_4(P_n)=14 + 6(n-4),~for ~n\geqslant 2,$$
$$W_6(P_n)=76 + 20(n-6),~\text{for}~ n\geqslant 3,\ \mbox{and}\ W_6(P_2)=2.$$
\end{lem}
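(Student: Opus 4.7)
The plan is to prove both identities by induction on $n$, after reducing to the unsigned path.

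First, I would observe that since $P_n$ is a tree, every signature on it is switching equivalent to the all-positive one (one may walk along the tree switching individual endpoints as needed to flip each negative edge in turn), so $W_k(P_n)$ is independent of the chosen signature. Hence $W_k(P_n) = \operatorname{tr}(A^k)$ for the ordinary unsigned adjacency matrix of $P_n$, and this number counts closed walks of length $k$ in $P_n$. Such a closed walk based at a vertex indexed $i$ is a sequence $i = p_0, p_1, \ldots, p_k = i$ with each $|p_{j+1} - p_j| = 1$ and $p_j \in \{1, \ldots, n\}$ throughout; that is, a $\pm 1$ lattice walk of length $k$ confined to the segment $\{1, \ldots, n\}$.

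I would verify the base cases $n=2,3$ for the $W_4$ formula and $n=2,3,\ldots,6$ for the $W_6$ formula by direct enumeration, since at each starting vertex only a short finite list of walks is possible. For the inductive step $n\to n+1$, I append a new pendant vertex to one endpoint of $P_n$ and track the change in $\operatorname{tr}(A^k)$. Only vertices within distance $k/2$ of the new vertex are affected, and at each such vertex one counts the walks that genuinely use the newly added vertex (together with the new walks based at the new vertex itself). Summing these increments should yield exactly $\binom{k}{k/2}$, which is $6$ for $k=4$ and $20$ for $k=6$, matching the linear coefficients in the claimed formulas. Conceptually, $\binom{k}{k/2}$ is the number of closed lattice walks of length $k$ based at any single interior vertex of the bi-infinite path $\mathbb{Z}$, so in the limit each additional vertex of $P_n$ contributes exactly this many new closed walks.

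The main bookkeeping hurdle is the enumeration of walk counts at vertices near the endpoints, particularly for $k=6$: a short reflection-principle calculation (or a direct listing) shows that an endpoint contributes $5$ walks, its neighbor contributes $14$, and the next vertex contributes $19$, so the two endpoint regions together contribute $2(5+14+19) = 76$, which explains the constant term. The exceptional case $W_6(P_2)=2$ is immediate: the only closed walks of length $6$ in $P_2$ are the two alternating sequences starting at either of the two vertices, because $P_2$ is so short that the walk is fully constrained at every step.
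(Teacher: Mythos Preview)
Your proposal is correct and follows exactly the approach the paper indicates: the paper simply asserts that the lemma ``can be easily proved by induction'' and gives no further details, so your plan (reduce to the unsigned path, check small base cases directly, and show that appending a pendant vertex increases $\operatorname{tr}(A^k)$ by $\binom{k}{k/2}$) is precisely a fleshed-out version of that induction. Your reflection-principle counts $5,14,19$ for $k=6$ near an endpoint are accurate and indeed give the constant term $76$, and the telescoping that makes the increment equal to the interior contribution $\binom{k}{k/2}$ is the only step where you should make the bookkeeping explicit.
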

A cycle in a signed graph is called \textit{balanced} if it contains an even number of negative edges,
otherwise it is called \textit{unbalanced}.
A signed graph is balanced if all its circuits are balanced.
It is easily seen that a signed path and a balanced cycle is switching isomorphic with the underlying unsigned path and cycle, respectively.
An unbalanced cycle is switching isomorphic with the underlying cycle with precisely one negative edge.

\begin{lem}\cite[Lemma 4.4]{belardo}.\label{spec}
Let $P_n$ and $C_n$ (resp. $C_{n}^{-}$) be the path and the balanced cycle (resp. unbalanced cycle) on $n$ vertices, respectively.
Then the following hold:
\vspace{-0.1cm}
\begin{align*}
\mathrm{Spec}(C_n)&=\big\{2\mathrm{cos}\dfrac{2i\pi}{n}:~i=0, 1, \ldots, n-1\big\},\\
\mathrm{Spec}(C_{n}^{-})&=\big\{2\mathrm{cos}\dfrac{(2i+1)\pi}{n}:~i=0, 1, \ldots, n-1\big\},\\
\mathrm{Spec}(P_n)&=\big\{2\mathrm{cos}\dfrac{i\pi}{n+1}:~i=1, \ldots, n\big\}.
\end{align*}
\end{lem}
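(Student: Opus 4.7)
The plan is to solve the eigenvector equation directly in each of the three cases, treating them in parallel. For all three graphs the equation $Au = \lambda u$ reduces, at every interior vertex, to the three-term recurrence
\[
u_{i-1} + u_{i+1} = \lambda u_i,
\]
and the three families differ only in the boundary conditions imposed on the phantom indices $u_0$ and $u_{n+1}$.

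For $P_n$, write $\lambda = 2\cos\theta$; the general real solution of the recurrence is $u_i = A\sin(i\theta) + B\cos(i\theta)$. The two endpoint equations translate into $u_0 = u_{n+1} = 0$: the first forces $B = 0$, the second then requires $\sin((n+1)\theta) = 0$, hence $\theta = j\pi/(n+1)$ for $j = 1, \ldots, n$. This exhibits $n$ pairwise distinct eigenvalues $2\cos(j\pi/(n+1))$, which must therefore be the full spectrum of $P_n$.

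For the cycles it is more convenient to write $\lambda = \omega + \omega^{-1}$ and try $u_i = \omega^i$. For the balanced cycle $C_n$ the cyclic structure translates into the periodic condition $u_{i+n} = u_i$, which forces $\omega^n = 1$; running $\omega$ through the $n$ distinct $n$th roots of unity yields the eigenvalues $2\cos(2k\pi/n)$ for $k = 0, 1, \ldots, n-1$. For $C_n^-$, after switching we may assume that exactly one edge, say $v_n v_1$, is negative. Then $(Au)_1 = u_2 - u_n$ and $(Au)_n = u_{n-1} - u_1$, so the same three-term recurrence extends to all indices provided we set $u_0 = -u_n$ and $u_{n+1} = -u_1$. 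This is the antiperiodic condition $u_{i+n} = -u_i$, which the ansatz $u_i = \omega^i$ satisfies precisely when $\omega^n = -1$, giving eigenvalues $2\cos((2k+1)\pi/n)$ for $k = 0, 1, \ldots, n-1$.

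What remains is to verify that each list exhausts the spectrum with the correct multiplicities. For $P_n$ this is immediate from distinctness. For the cyclic cases, the $n$ distinct roots $\omega$ of $z^n = \pm 1$ yield $n$ linearly independent exponential eigenvectors by a Vandermonde argument; pairing each $\omega$ with $\omega^{-1}$ and taking real and imaginary parts converts them into a real basis of eigenvectors of the real symmetric matrix $A$, accounting for all $n$ eigenvalues. The only mildly delicate step is identifying the correct antiperiodic boundary condition for $C_n^-$; once that is in place, the remaining verifications are routine.
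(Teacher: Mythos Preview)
Your argument is correct and is the standard direct computation: reduce $Au=\lambda u$ to the Chebyshev-type recurrence $u_{i-1}+u_{i+1}=\lambda u_i$, and distinguish the three graphs by the boundary data (Dirichlet $u_0=u_{n+1}=0$ for $P_n$, periodic $u_{i+n}=u_i$ for $C_n$, antiperiodic $u_{i+n}=-u_i$ for $C_n^-$). The identification of the antiperiodic condition for $C_n^-$ is right, and the Vandermonde count of independent eigenvectors cleanly handles multiplicities.

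There is nothing to compare on the paper's side: the lemma is quoted from \cite{belardo} without proof, so the authors do not give their own argument. Your self-contained derivation therefore adds something the paper omits. One small remark: in the path case your substitution $\lambda=2\cos\theta$ with real $\theta$ tacitly assumes $|\lambda|\le 2$; strictly speaking one should either allow complex $\theta$ (equivalently, use the exponential ansatz $u_i=\omega^i$ as you do for the cycles) or note separately that $|\lambda|>2$ is impossible because the recurrence with $u_0=u_{n+1}=0$ then has only the trivial solution. This is a presentational point rather than a gap.
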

Observe that $C_n$ has largest eigenvalue $2$, and that $C_n^-$ has smallest eigenvalue $-2$ when $n$ is odd,
while all eigenvalues of the path are strictly between $-2$ and $2$.
Moreover, all eigenvalues of the path are simple (have multiplicity $1$), while $C_n$ and $C_n^-$ have (many)
eigenvalues of multiplicity $2$.

Suppose $\Gamma$ is a signed graph of order $n$ with adjacency matrix $A$.
Then we write $\det(\Gamma)$ instead of $\det(A)$.
So $\det(\Gamma)$ equals the product of the eigenvalues of $\Gamma$,
and if $p(x)=a_0+a_1 x +\ldots +a_{n-1}x^{n-1}+x^{n}$ is the characteristic polynomial
of $\Gamma$, then clearly $\det(\Gamma)=a_0=p(0)$.
We define $\det'(\Gamma)=a_1=p'(0)$.
If $\Gamma$ has an eigenvalue $0$, then $\det(\Gamma)=0$, and $\det'(\Gamma)$
is the product of the $n-1$ remaining eigenvalues.

\begin{lem}\label{detpn}
\begin{itemize}
\item[(a)]
If $n$ is even, then $\mathrm{det}(P_n)=(-1)^{\frac{n}{2}}$.
\item[(b)]
If $n$ is odd then $\mathrm{det}^{'}(P_n)=(n+1)/2$.
\end{itemize}
\end{lem}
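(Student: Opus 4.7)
The natural route is via the three-term recurrence for the characteristic polynomial of the path. Writing $p_n(x)$ for the monic characteristic polynomial $\det(xI-A(P_n))$ and expanding along the last row of the tridiagonal adjacency matrix, one obtains
\begin{equation*}
p_n(x) \;=\; x\, p_{n-1}(x) \;-\; p_{n-2}(x), \qquad n \ge 2,
\end{equation*}
with $p_0(x)=1$ and $p_1(x)=x$. By the definitions given just before the lemma, $\det(P_n)=p_n(0)$ and $\det'(P_n)=p_n'(0)$, so both halves of the lemma will be extracted by evaluating this recurrence (and its derivative) at $x=0$.

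For part (a), substituting $x=0$ in the recurrence immediately gives $p_n(0) = -p_{n-2}(0)$. Starting from $p_0(0)=1$, a one-line induction on even indices yields $p_{2m}(0)=(-1)^m$, which is precisely the claim.

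For part (b), differentiating the recurrence and then putting $x=0$ produces the scalar recurrence
\begin{equation*}
p_n'(0) \;=\; p_{n-1}(0) \;-\; p_{n-2}'(0).
\end{equation*}
For odd $n=2k+1$, part (a) already computes $p_{n-1}(0)=(-1)^k$, so this collapses to $p_{2k+1}'(0) = (-1)^k - p_{2k-1}'(0)$. Starting from $p_1'(0)=1$, induction on $k$ then gives $p_{2k+1}'(0)=(-1)^k(k+1)$, whose absolute value is $k+1=(n+1)/2$ as recorded in the lemma.

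In both parts the work is essentially bookkeeping once the recurrence is on the table; there is no real obstacle. The only things to take care of are getting the tridiagonal expansion correct (with the right initial data $p_0=1$, $p_1=x$) and tracking the alternating signs cleanly through the induction in (b).
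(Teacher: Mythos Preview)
Your argument is correct. Part~(a) is essentially identical to the paper's: both use the tridiagonal recurrence $\det(P_{n})=-\det(P_{n-2})$, you phrase it at the level of $p_n(x)$ while the paper expands $\det(A)$ directly.

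For part~(b) you take a genuinely different route. The paper exploits the bipartite block form
\[
B_n=\begin{bmatrix}O & N\\ N^\top & O\end{bmatrix},
\]
observes that the nonzero eigenvalues of $B_n^2$ are those of $NN^\top$ (doubled), identifies $NN^\top=2I+B_m$ with $m=(n-1)/2$, and then runs a second tridiagonal recursion $d_{m+2}=2d_{m+1}-d_m$ to get $d_m=m+1$. Your approach---differentiate the single recurrence $p_n=xp_{n-1}-p_{n-2}$ and evaluate at $0$---is shorter and avoids introducing any auxiliary matrix structure. What the paper's route buys is an explicit identification of $\det'(P_n)$ with the manifestly positive quantity $\det(NN^\top)$, whereas your computation yields $p'_{2k+1}(0)=(-1)^k(k+1)$ and you must pass to the absolute value. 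That sign discrepancy is real (for instance $p_3'(0)=-2$, not $+2$), so strictly speaking the lemma as stated holds only up to sign; the paper's proof glosses over the same point in the step ``$\det'(B_n)=\det(NN^\top)$''. Since every later use of the lemma in the paper depends only on the parity or the absolute value of $\det'(P_n)$, this is harmless, and your remark about the absolute value is the honest way to record it.
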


\begin{proof}
(a)~Clearly $\det(P_2)=-1$, and expanding $\det(P_{n+2})$ with respect to an end vertex of $P_{n+2}$ gives $\det(P_{n+2})=-\det(P_n)$.
\\
(b)~Let $B_n$ be the adjacency matrix of $P_n$.
When $n$ is odd, we can write
\[
B_n=\left[\begin{array}{cc}
O      & N \\
N^\top & O \end{array}\right],\ \mbox{where }
N=\left[\begin{array}{ccccc}
1 & 1      & 0      & \cdots & 0 \\
  & \ddots & \ddots &        &   \\
  &        & \ddots & \ddots &   \\
0 & \cdots & 0      & 1      & 1
\end{array}\right].
\]
The eigenvalues of $B_n^2$ are the eigenvalues of $NN^\top$ together with the eigenvalues of $N^\top N$.
Since $NN^\top$ and $N^\top N$ have the same nonzero eigenvalues it follows that $\det'(B_n)=\det(NN^\top)$.
We easily have that $NN^\top=2I+B_m$, where $m=(n-1)/2$.
Write $d_m=\det(2I+B_m)$, then $d_1=2$, $d_2=3$ and $d_{m+2}=2d_{m+1}-d_m$, so $d_m=m+1=(n+1)/2$.
\end{proof}
\begin{lem}\label{ddet}
Let $B$ be a symmetric matrix of order $n$ with two equal rows (and columns),
and let $B'$ be the matrix of order $n-1$ obtained from $B$ by deleting one repeated row and column.
Then $\det(B)=0$, and $\det'(B)=2\det(B')$.
\end{lem}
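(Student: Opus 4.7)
The first claim, $\det(B) = 0$, is immediate: the two equal rows make the rows of $B$ linearly dependent.

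For the second claim, the plan is to expand the characteristic polynomial and identify $\det'(B) = p'(0)$ (up to the sign $(-1)^{n-1}$ absorbed in the excerpt's normalization) with the sum of the principal $(n-1)\times(n-1)$ minors of $B$. Writing $p(x) = \det(xI - B) = \sum_{k=0}^{n} (-1)^k e_k(B)\, x^{n-k}$, where $e_k(B)$ denotes the sum of the $k\times k$ principal minors of $B$, the coefficient of $x$ is $(-1)^{n-1} e_{n-1}(B)$, and the usual cofactor expansion gives
$$
e_{n-1}(B) \;=\; \sum_{k=1}^n \det(B_{\hat k}),
$$
where $B_{\hat k}$ is the principal submatrix of order $n-1$ obtained by deleting the $k$-th row and column of $B$. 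So the task reduces to showing that this sum equals $2\det(B')$.

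Let $i \neq j$ be the two indices of the repeated row (and, by symmetry, column) of $B$. I would then split the sum over $k$ into two cases. If $k \notin \{i,j\}$, the submatrix $B_{\hat k}$ still contains the rows of $B$ indexed by $i$ and $j$, restricted to all columns other than the $k$-th; the restricted rows remain equal, so $B_{\hat k}$ has two equal rows and $\det(B_{\hat k}) = 0$. If $k \in \{i,j\}$, then $B_{\hat k} = B'$ in both cases: when $k = i$ this is the definition of $B'$, and when $k = j$ it follows from the fact that the symmetric repetition makes rows and columns $i$ and $j$ interchangeable, so deleting row/column $j$ yields exactly the same submatrix as deleting row/column $i$. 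Summing over $k$, only the two terms $k=i$ and $k=j$ survive, each contributing $\det(B')$, which gives the identity.

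There is really no obstacle here: the content is a one-line combinatorial observation about principal minors once the expansion of the characteristic polynomial is recalled. The only bookkeeping is the overall sign $(-1)^{n-1}$ relating $p'(0)$ to $e_{n-1}(B)$, which is handled by the convention under which $\det'$ is defined in the preceding paragraphs.
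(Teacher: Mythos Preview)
Your argument is correct and gives a genuinely different proof from the paper's. The paper proceeds spectrally: assuming the repeated rows are the first two, it conjugates $B$ by the orthogonal matrix $Q=\left[\begin{smallmatrix}Q_2&O\\O&I_{n-2}\end{smallmatrix}\right]$ with $Q_2=\tfrac{1}{\sqrt 2}\left[\begin{smallmatrix}1&1\\-1&1\end{smallmatrix}\right]$, obtaining $Q^\top BQ=\left[\begin{smallmatrix}0&0\\0&B''\end{smallmatrix}\right]$, where $B''$ is $B'$ with its first row and column scaled by $\sqrt 2$. Then $\mathrm{Spec}(B)\setminus\{0\}=\mathrm{Spec}(B'')$, so $\det'(B)=\det(B'')=2\det(B')$. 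Your route, via the principal-minor expansion of the characteristic polynomial, is more elementary and avoids the orthogonal change of basis; the paper's route, on the other hand, gives a bit more, namely an explicit identification of the nonzero spectrum of $B$ with that of a concrete $(n-1)\times(n-1)$ matrix.

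One small imprecision worth tightening: for $k=j$ you assert that $B_{\hat\jmath}$ is literally the same matrix as $B'=B_{\hat\imath}$. In general it is only permutation-similar to $B'$ (conjugate by the transposition $(i\,j)$ restricted to the surviving indices), not equal entrywise; of course that is all you need, since the determinants then agree. Your handling of the sign $(-1)^{n-1}$ is appropriate: the paper's own conventions for $\det'$ are already stated up to this sign, and in any case the lemma is only used later to establish parity, where the sign is irrelevant.
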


\begin{proof}
Clearly $B$ is singular, so $\det(B)=0$.
Without loss of generality we assume that the first two rows and columns of $B$ are equal.
Consider the following orthogonal matrices
$Q_2=\frac{1}{\sqrt{2}}\left[\begin{array}{cc}1&1\\-1&1\end{array}\right]$, and $Q=\left[\begin{array}{cc}Q_2&O\\O&I_{n-2}\end{array}\right]$.
Then
$Q^\top BQ= \left[\begin{array}{cc}0&\underline{0}^\top\\ \underline{0}&B''\end{array}\right]$,
where $B''$ is obtained from $B'$ by multiplying the first row and column by $\sqrt{2}$.
On the other hand, $B$ and $Q^\top BQ$ are cospectral, therefore Spec$(B'')=\mbox{Spec}(B)\setminus\{0\}$.
So $\det'(B)=\det(B'')=2\det(B')$.
\end{proof}

\section{Signed graphs cospectral with the path}

In the remaining of the paper we assume that $\Gamma$ is a signed graph cospectral but not switching isomorphic with the path $P_n$.
We know that $\Gamma$ has $n$ vertices and $n-1$ edges.
Since $\Gamma$ is not a signed path, $\Gamma$ has at least two components.
In this section we obtain conditions for the components of $\Gamma$. \\
Graph $D_m$ in Fig.~\ref{nonsimple}, is the union of $K_{1, 3}$ and $P_{m-4}$, where an end vertex of $P_{m-4}$ is joined to a vertex of degree one in $K_{1, 3}$.
\begin{obs}\label{obs2}
\begin{enumerate}
\item
By the interlacing theorem and Lemma \ref{spec}, $\Gamma$ contains no odd cycle,
no balanced even cycle, and no star $K_{1, 4}$ as an induced subgraph, (note that the biggest adjacency eigenvalue of
$K_{1, 4}$ is $2$).
Hence, all cycles in $\Gamma$ are unbalanced of even order, and the maximum degree of $\Gamma$ is at most $3$.
\item
We checked (by computer) that a signed graph for which the underlying unsigned graph is one of the graphs given in Fig. \ref{greater2}
has largest eigenvalue at least $2$.
Therefore, no graph in Fig. \ref{greater2} has an induced subgraph of $\Gamma$.
Also each graph of Fig.~\ref{nonsimple} has at least one eigenvalue of multiplicity at least $2$.
Therefore none of these can be a component of $\Gamma$.
Note that Graph~(g) in Fig.~\ref{nonsimple}, has an eigenvalue of multiplicity $3$, so by the interlacing theorem, each graph on $8$ vertices having Graph~(g) as an induced subgraph has at least one non-simple eigenvalue, and therefore cannot be a component of $\Gamma$.

\item
Let $M$ be Graph~(e) of Fig.~\ref{nonsimple}.
Then $M$ is not an induced subgraph of $\Gamma$.
Indeed, $M$ is not a component of $\Gamma$, and every graph on $9$ vertices with maximum
degree $3$ that contains $M$ as an induced subgraph contains an odd cycle, or Graph~(a) from Fig.~\ref{greater2}.

\item
A $\Theta$-graph is a union of three internally disjoint paths $P_{p}, P_{q}, P_{r}$ with
common end vertices, where $p, q, r \geqslant 2$ and at most one of them equals $2$.
If $p, q, r \geqslant 3$ we call the $\Theta$-graph \textit{proper}.
A proper signed $\Theta$-graph has at least one balanced cycle.
Then using the interlacing theorem for this induced balanced cycle,
we conclude that a $\Gamma$ has no proper $\Theta$-graph as an induced subgraph.

\begin{figure}[!htb]
\minipage{0.90\textwidth}
\includegraphics[width=\linewidth]{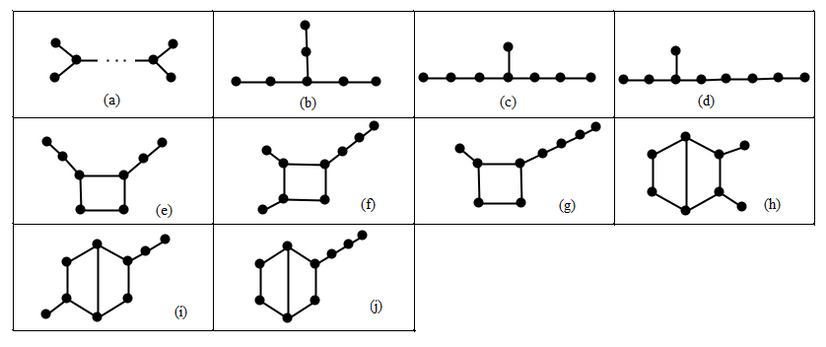}
\caption{Graphs with largest eigenvalue at least $2$}\label{greater2}
\endminipage
\end{figure}

\begin{figure}[!htb]
\minipage{0.88\textwidth}
\includegraphics[width=\linewidth]{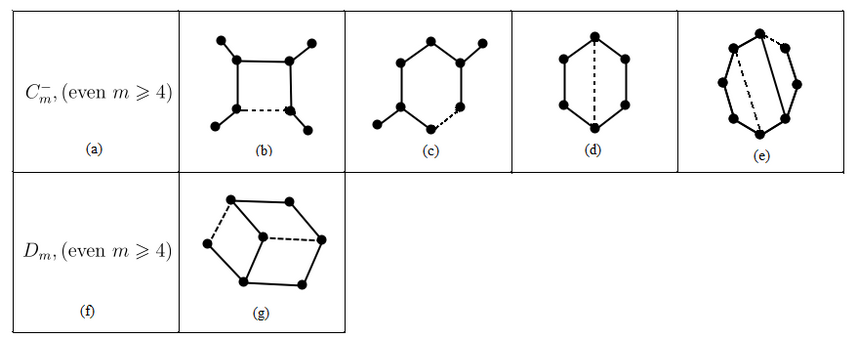}
\caption{Graphs with some non-simple eigenvalues (dashed edges are negative)}\label{nonsimple}
\endminipage
\end{figure}

\item
An unbalanced even cycle has eigenvalues of multiplicity $2$, and therefore cannot occur as a component of $\Gamma$.
Furthermore, we claim that $\Gamma$ contains no induced even cycle of order more than $6$.
Indeed, let $C_r^-$ be an unbalanced induced cycle for $r\geqslant 8$.
Since $C_r^-$ is not a component, there exist a vertex $v$ out of $C_r^-$, which is adjacent to one, two or three vertices of $C_r^-$.
If $v$ is adjacent to just one vertex of $C_r^-$, then we have Graph $(c)$ given in Fig. \ref{greater2} as an induced subgraph.
If $v$ is adjacent to two vertices of $C_r^-$, then graph $\langle V(C_r) \cup \{v\}\rangle$ is a proper $\Theta$-graph
(recall that $\Gamma$ has no odd cycle).
If $v$ is adjacent to three vertices of $C_r^-$, then it is easy to check that the graph $\langle V(C_r) \cup \{v\}\rangle$ has
one of the graphs $(a)$ or $(b)$ given in Fig. \ref{greater2} as an induced subgraph.
In all three case we have a contradiction, and since no vertex of $\Gamma$ has degree more than three, the claim is proved.
\end{enumerate}
\end{obs}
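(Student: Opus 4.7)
The plan is to establish each of the five items by combining the Cauchy interlacing theorem with the explicit spectra of Lemma~\ref{spec} and the extremal/multiplicity features of $\mathrm{Spec}(P_n)$: every eigenvalue of $P_n$ is simple and lies strictly inside $(-2,2)$. Throughout, since $\Gamma$ is cospectral with $P_n$, any induced subgraph of $\Gamma$ must have spectral radius strictly less than $2$ and smallest eigenvalue strictly greater than $-2$, and no component of $\Gamma$ can have a repeated eigenvalue.

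For (1), I would invoke Lemma~\ref{spec}: any signed odd cycle is switching equivalent to $C_n$ or $C_n^-$, and for odd $n$ these contain $2$ or $-2$, respectively; every balanced even cycle contains $2$; and $K_{1,4}$ has spectral radius $2$. Interlacing rules out each as an induced subgraph, and excluding $K_{1,4}$ additionally yields maximum degree at most $3$. For (2), a finite verification suffices: for each graph in Fig.~\ref{greater2} one checks (by computer) that every signing has spectral radius $\ge 2$, while each graph in Fig.~\ref{nonsimple} has a repeated eigenvalue under every allowed signing, forbidding them as components. The final clause in (2) uses interlacing to promote a multiplicity-$3$ eigenvalue of graph~(g) to a non-simple eigenvalue of any $8$-vertex extension.

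The conceptual core sits in (4). The step I would carry out is this: let the three internally disjoint paths of the proper $\Theta$-graph have edge-sign products $a,b,c\in\{+1,-1\}$. The three cycles formed by pairs of paths have sign products $ab,\ bc,\ ca$, whose overall product is $(abc)^2=+1$, so they cannot all equal $-1$. Hence at least one cycle is balanced; since $p,q,r\ge 3$, that cycle has length $\ge 6$, and Lemma~\ref{spec} gives it largest eigenvalue $2$, contradicting interlacing. With (4) in hand, items (3) and (5) reduce to case analyses. For (3) one enumerates, under the degree bound from (1) and the exclusions from (2), the ways a ninth vertex can attach to graph~(e) of Fig.~\ref{nonsimple}, checking that each such extension forces either an odd cycle or a graph from Fig.~\ref{greater2}. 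For (5), the multiplicity $\ge 2$ of every eigenvalue of $C_{2m}^-$ follows by pairing the index $i$ with $2m-1-i$ in Lemma~\ref{spec}, since $\cos\frac{(2i+1)\pi}{2m}=\cos\frac{(2(2m-1-i)+1)\pi}{2m}$; then for $r\ge 8$ a vertex $v$ outside an induced $C_r^-$ with one, two, or three attachments produces, respectively, graph~(c) of Fig.~\ref{greater2}, a proper $\Theta$-graph forbidden by (4), or one of graphs~(a), (b) of Fig.~\ref{greater2}.

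The step I expect to be most delicate is the sign-product argument for proper $\Theta$-graphs in (4): everything in (5), and therefore the eventual classification of induced cycles in $\Gamma$, pivots on it. The remaining difficulty is bookkeeping: ensuring completeness of the computer check in (2) (enumerating over all admissible signings of each graph and all choices of the $9$-th vertex in (3)) and of the three-subcase adjacency analysis in (5), which implicitly invokes the full package of constraints gathered in (1), (2), and (4).
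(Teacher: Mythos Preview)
Your proposal is correct and tracks the paper's inline justifications for this Observation essentially step by step; the sign-product argument you give for item~(4) is the natural way to fill in the claim the paper states without proof. One harmless slip: in (4) the balanced cycle formed by two of the paths $P_p,P_q$ (sharing both endpoints) has $p+q-2\ge 4$ vertices, not necessarily $\ge 6$; since every balanced cycle has largest eigenvalue $2$ regardless of length, the interlacing contradiction is unaffected. You might also make explicit, as the paper does, that in the two-attachment subcase of (5) the absence of odd cycles is what guarantees the resulting $\Theta$-graph is proper (otherwise $v$ and its two neighbours on $C_r^-$ would form a triangle).
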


In \cite{mckee} the authors have classified signed graphs having all their
eigenvalues in the interval $[-2, 2]$.
Now, based on \cite[Theorem 4]{mckee} and Observation \ref{obs2} we have the following results.

\begin{lem}\label{C6}
If $H$ is a component of $\Gamma$ containing an induced $6$-cycle, then $H$ is one of the graphs presented in Fig.~\ref{6-cycle}
\end{lem}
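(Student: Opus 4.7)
The plan is to combine the structural restrictions from Observation~\ref{obs2} with the McKee--Smyth classification of signed graphs with spectrum contained in $[-2,2]$, cited just before the lemma from \cite[Theorem~4]{mckee}.

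First I set up the basic constraints. Since $\Gamma$ is cospectral with $P_n$, every eigenvalue of the component $H$ lies strictly inside $(-2,2)$ and every eigenvalue is simple. By Observation~\ref{obs2}(1), the induced $6$-cycle in $H$ cannot be balanced, so it is switching-isomorphic to $C_6^-$. By Observation~\ref{obs2}(5), an unbalanced cycle cannot itself be a component, so $H$ strictly contains this $C_6^-$.

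Next I would analyze how a vertex $v\in V(H)\setminus V(C_6^-)$ can attach to the cycle. If $v$ is adjacent to two cycle-vertices, then, depending on their cyclic distance, one either creates an odd $3$-cycle or a proper $\Theta$-graph (distance $2$ gives $P_3,P_3,P_5$; distance $3$ gives $P_3,P_4,P_4$); both situations are forbidden by Observation~\ref{obs2}(1),(4). If $v$ is adjacent to three cycle-vertices, a short case analysis (using that there is no odd cycle) produces an induced subgraph from Fig.~\ref{greater2}, contradicting Observation~\ref{obs2}(2). So each external vertex of $H$ is attached to exactly one cycle vertex, and since the maximum degree of $H$ is $3$ and each cycle vertex already has cycle-degree~$2$, each cycle vertex carries at most one pendant subtree. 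Hence $H$ is $C_6^-$ together with subcubic trees hung at some of the cycle vertices.

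It then remains to enumerate the possible rooted trees and their positions on the cycle. Here I would invoke the McKee--Smyth list \cite[Theorem~4]{mckee}, which describes all signed graphs with spectrum in $[-2,2]$ by a short explicit family, and then discard those candidates that (i) contain an induced subgraph from Fig.~\ref{greater2}, (ii) are or contain as an induced subgraph one of the graphs in Fig.~\ref{nonsimple} (including graph $M$ of Observation~\ref{obs2}(3) and graph (g) of Observation~\ref{obs2}(2)), or (iii) have a non-simple eigenvalue or an eigenvalue equal to $\pm 2$. The candidates surviving all of these tests should be exactly the graphs listed in Fig.~\ref{6-cycle}.

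The main obstacle is this final enumeration. The simplicity and strict spectral-radius conditions are the subtle ones: symmetrically placed pendants on antipodal or opposite sides of $C_6^-$ tend to create a non-trivial automorphism of the signed graph and hence a repeated eigenvalue, while pendant paths that are too long push the spectral radius to $2$. Tracking all of these restrictions simultaneously, with the help of the computer checks already acknowledged in Observation~\ref{obs2}(2), is what refines the general McKee--Smyth list to the short catalogue of Fig.~\ref{6-cycle}.
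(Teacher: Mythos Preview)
Your proposal is correct and follows essentially the same route as the paper: note that the induced $6$-cycle is unbalanced and cannot itself be a component (non-simple eigenvalues), then invoke the McKee--Smyth classification \cite[Theorem~4]{mckee} and discard those candidates appearing in Fig.~\ref{nonsimple}. The paper's proof is simply terser---it omits your intermediate attachment analysis (your step~2) entirely, since McKee--Smyth already hands over a finite explicit list of signed graphs with spectrum in $[-2,2]$ containing $C_6^-$, making the local attachment discussion redundant.
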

\begin{figure}[!htb]
\minipage{0.45\textwidth}
\includegraphics[width=\linewidth]{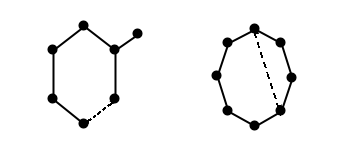}
 \caption{}\label{6-cycle}
\endminipage
\end{figure}
\begin{proof}
We know that the $6$-cycle is unbalanced.
Also, the unbalanced $6$-cycle $C_6^-$ has eigenvalues of multiplicity $2$, so $H\neq C_6^-$.
By \cite[Theorem 4]{mckee} and Fig.~\ref{nonsimple}, there are
only two types for component $H$, see Fig.~\ref{6-cycle}.
\end{proof}

\begin{thm}\label{main}
The only graphs that can occur as a component of $\Gamma$ are listed in Fig.~\ref{1}.
\end{thm}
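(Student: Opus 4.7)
The strategy is to reduce the classification to the McKee--Smyth enumeration of signed graphs with spectrum in $[-2,2]$ (\cite[Theorem~4]{mckee}), and then carve down that list using the structural restrictions already recorded in Observation~\ref{obs2} and Lemma~\ref{C6}. The starting observation is that if $H$ is any component of $\Gamma$, then the spectrum of $H$ is a subset of the spectrum of $\Gamma = \mathrm{Spec}(P_n)$, which lies strictly in $(-2,2)$. Hence $H$ has all eigenvalues in $(-2,2)$ and therefore appears in the McKee--Smyth list; the proof then amounts to checking which entries of that list survive the forbidden-subgraph conditions of Observation~\ref{obs2}.

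I would organize the argument by a case analysis on the induced cycle structure of $H$. The constraints to keep in mind throughout are: maximum degree at most $3$, no $K_{1,4}$, no odd cycle, no balanced even cycle, no proper $\Theta$-graph, no induced unbalanced even cycle of length exceeding $6$, no induced copy of any graph in Fig.~\ref{greater2}, and no copy of the graph $M$ from Fig.~\ref{nonsimple}(e) as an induced subgraph (all from Observation~\ref{obs2}). In addition, $H$ itself cannot be one of the graphs in Fig.~\ref{nonsimple} nor an unbalanced cycle, since each of these has a non-simple eigenvalue while $P_n$ has only simple eigenvalues.

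The case split then reads: if $H$ is acyclic, the McKee--Smyth trees with all eigenvalues in $(-2,2)$ and maximum degree $\le 3$ form a short list, and the forbidden trees from Fig.~\ref{greater2} together with the tree entries of Fig.~\ref{nonsimple} (notably $D_m$) collapse this list to paths and a few ``Y-shaped'' trees of small order that appear in Fig.~\ref{1}. If $H$ contains an induced $6$-cycle, we are done by Lemma~\ref{C6}. If the only induced cycles in $H$ have length $4$, then $H$ is, by McKee--Smyth, obtained by attaching pendant paths to one or two unbalanced $4$-cycles, possibly joined by a path; the upper bound $3$ on the degree, the no-$\Theta$ condition, and the exclusion of the remaining graphs in Fig.~\ref{nonsimple} and Fig.~\ref{greater2} leave precisely the $4$-cycle-based components shown in Fig.~\ref{1}.

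The main obstacle is the bookkeeping in the $4$-cycle case: one has to verify, for each shape of attachment of pendant paths to a $4$-cycle (or to two $4$-cycles linked by a path), that the resulting graph is either already on the McKee--Smyth list and present in Fig.~\ref{1}, or else contains one of the forbidden induced subgraphs from Observation~\ref{obs2}(2)--(3). This is a finite but tedious verification that requires no new spectral computation beyond the ones already tabulated in Figs.~\ref{greater2} and \ref{nonsimple}; the spectral input from \cite{mckee} does all the heavy lifting.
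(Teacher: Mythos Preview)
Your approach is essentially the paper's: split on whether $H$ is a tree, contains an induced $6$-cycle, or contains only $4$-cycles; in the tree case invoke the classification of trees with spectral radius below $2$ (the paper cites \cite[Theorem~3.1.3]{brou-haem} rather than McKee--Smyth, but these agree for trees), and in the cyclic cases combine \cite[Theorem~4]{mckee} with Observation~\ref{obs2}, Lemma~\ref{C6}, and the exclusions tabulated in Figs.~\ref{greater2} and \ref{nonsimple}.

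There is one slip in your tree case. You write that the forbidden trees ``together with the tree entries of Fig.~\ref{nonsimple} (notably $D_m$)'' collapse the list ``to paths and a few `Y-shaped' trees of small order.'' In fact $D_m$ is \emph{not} excluded: it is Graph~(p) of Fig.~\ref{1} and is one of the four infinite families of admissible components (the paper notes this explicitly after Theorem~\ref{main}). The trees with all eigenvalues in $(-2,2)$ are exactly the simply-laced Dynkin diagrams $A_m$, $D_m$, $E_6$, $E_7$, $E_8$; of these only $K_{1,3}=D_4$ (wait, $K_{1,3}$ actually is excluded because it has a repeated eigenvalue) is removed for having a non-simple eigenvalue, leaving the paths $P_m$, the family $D_m$ for $m\ge 5$, and the three $E$-type trees---precisely Graphs (m)--(q) of Fig.~\ref{1}. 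Once you correct this, your argument matches the paper's.
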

\begin{proof}
Let $H$ be a component of $\Gamma$.
If $H$ is a tree, then since all eigenvalues are strictly less than $2$,
$H$ is one of the trees in Fig.~\ref{1} (see \cite[Theorem 3.1.3]{brou-haem}). Note that $K_{1, 3}$ is not among the graphs given in Fig.~\ref{1}, because $K_{1, 3}$ has a non-simple eigenvalue.
Now, suppose that $H$ has a cycle.
By Observation \ref{obs2}, $H$ has no induced unbalanced cycle of order more than $6$.
Hence, every induced cycle of $H$ has order $4$ or $6$.
If $H$ has an induced $6$-cycle, then $H$ is Graph (f) or (j) in Fig.~\ref{1}, by Lemma \ref{C6}.

Now, assume that $H$ has an induced unbalanced $4$-cycle but no induced $6$-cycle.
If $H=C_4^-$, then $\Gamma$ has non-simple eigenvalues.
By \cite[Theorem 4]{mckee}, Observation~\ref{obs2} and Figs.~\ref{greater2} and \ref{nonsimple}, $H$ is one of the graphs given in Fig.~\ref{1}.
\end{proof}

\begin{figure}[!htb]
\minipage{0.98\textwidth}
\includegraphics[width=\linewidth]{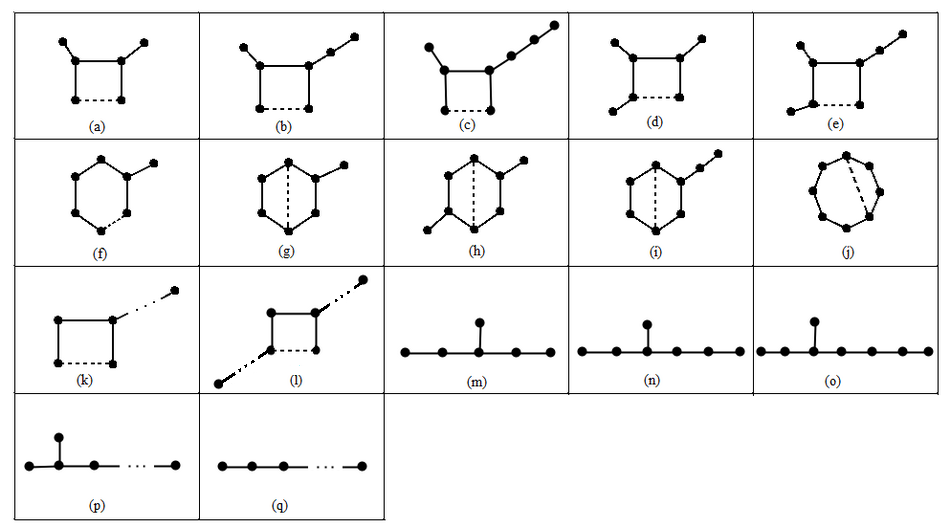}
 \caption{All possible components for $\Gamma$}\label{1}
\endminipage
\end{figure}

\begin{thm}
If $\Gamma$ is cospectral, but not switching isomorphic to $P_n$, then $\Gamma$ contains an unbalanced 4-cycle as induced subgraph.
\end{thm}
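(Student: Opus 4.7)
The plan is to argue by contradiction: suppose that $\Gamma$ contains no unbalanced $4$-cycle, and show that this forces $\Gamma$ to be switching isomorphic to $P_n$.

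The key tool is the invariant $W_4$. By splitting closed walks of length $4$ into those that backtrack along at least one edge and those that trace a $4$-cycle, I would first establish the identity
\[
W_4(\Gamma)\,=\,2m\,+\,4\sum_{v}\binom{d_v}{2}\,+\,8\bigl(c_4^{+}(\Gamma)-c_4^{-}(\Gamma)\bigr),
\]
where $m=|E(\Gamma)|$ and $c_4^{\pm}(\Gamma)$ denotes the number of balanced, respectively unbalanced, $4$-cycles of $\Gamma$. By Observation~\ref{obs2} all cycles of $\Gamma$ are unbalanced, so $c_4^{+}(\Gamma)=0$, and the contradiction hypothesis gives $c_4^{-}(\Gamma)=0$ as well. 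Since $\Gamma$ is cospectral with $P_n$, one has $m=n-1$, and Lemma~\ref{walk} gives $W_4(\Gamma)=6n-10$. The identity then forces $\sum_v \binom{d_v}{2}=n-2$. Combining this with the degree equations $\sum_i n_i=n$ and $\sum_i i\,n_i=2(n-1)$, where $n_i$ denotes the number of vertices of degree $i$ and $i\le 3$ by Observation~\ref{obs2}, a short elimination yields $n_0=n_3=0$.

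Hence every vertex of $\Gamma$ has degree $1$ or $2$, so $\Gamma$ is a disjoint union of paths and cycles. Any cycle component is an induced unbalanced cycle, which by Observation~\ref{obs2} has length $4$ or $6$; length $4$ is excluded by hypothesis, and $C_6^{-}$ is excluded as a component because it has an eigenvalue of multiplicity $2$ (as noted in the proof of Lemma~\ref{C6}). So $\Gamma$ is a disjoint union of paths, and having $n$ vertices and $n-1$ edges it must be connected; as a connected signed tree it is switching isomorphic to its underlying graph $P_n$, contradicting the standing hypothesis. Finally, the $4$-cycle produced is automatically induced: a chord would create a triangle, but Observation~\ref{obs2} rules out odd cycles in $\Gamma$.

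The main obstacle is the $W_4$ identity itself: one has to check carefully that every closed walk of length $4$ that retraces at least one edge contributes $+1$ regardless of the signature, so that the only signed contribution comes from the $4$-cycles. Once this is in place, the remaining combinatorial steps are short and rely only on results already assembled in the paper.
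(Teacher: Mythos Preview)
Your proof is correct and takes a genuinely different route from the paper's. The paper's argument leans on the full component classification (Theorem~\ref{main} and Fig.~\ref{1}, which in turn invokes the McKee--Smyth result~\cite{mckee}): assuming no induced $C_4^{-}$, the only non-tree component left in the list is Graph~(f), and a short case check against the spectra of the remaining tree components finishes it. You bypass this classification entirely by extracting the degree sequence from $W_4$. Your identity $W_4(\Gamma)=2m+4\sum_v\binom{d_v}{2}+8(c_4^{+}-c_4^{-})$ is correct (the non-cycle closed $4$-walks are exactly those with $v_2=v_0$ or with $v_3=v_1$ and $v_2\neq v_0$, each with sign $+1$), and the linear algebra with $n_0,\dots,n_3$ does force $n_0=n_3=0$. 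One small point of presentation: you need the fact that every $4$-cycle in $\Gamma$ is induced already at the start, to justify $c_4^{-}=0$ from the hypothesis; you only state it at the end, but the logic is there.

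What each approach buys: yours is more elementary and self-contained, using only Lemma~\ref{walk} and the easy parts of Observation~\ref{obs2} (no odd or balanced cycles, $\Delta\le 3$, no large induced even cycle, $C_6^{-}$ has repeated eigenvalues). The paper's approach is economical in context, since the classification of Fig.~\ref{1} is needed anyway for the later sections; once that is in hand, the present theorem is almost a corollary.
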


\begin{proof}
Suppose $\Gamma$ does not contain an $C_4^-$.
Then, since $\Gamma$ contains an induced unbalanced cycle $C_r^-$ with $r\leqslant 6$,
Theorem~\ref{main} implies that Graph (f) of Fig.~\ref{1} is a component of $\Gamma$.
Also there are not two or more components isomorphic to Graph~(f), since then $\Gamma$ would have eigenvalues of multiplicity
at least $2$.
So we can conclude that $\Gamma$ has just one non-tree component, which is Graph~(f),
and there is just one more component isomorphic to (m), (n), (o), (p), or (q) of Fig.~\ref{1} because the size of $\Gamma$ should be equal to the order of $\Gamma$ minus $1$.
Moreover, the reader can find the spectrum of Graphs ~(m), (n), (o), (p), and (q) in Fig.~\ref{1} on \cite[Theorem 3.1.3]{brou-haem}. By verification it follows that none of these possibilities has the spectrum of $P_n$.
\end{proof}

Note that only four cases in Fig.~\ref{1} represent an infinite family.
Graph~(q) of order $m$ is the path $P_m$, and Graph~(p) of order $m$ is known as $D_m$.
Graph~(k) and (l) will be denoted by $H_t$ and $H_{t}^{t+m}$, respectively.
More precisely, $H_t$ is the union of $C_4^-$ and $P_t$, where an end vertex of $P_t$ is joined to a vertex of $C_4^-$,
and $H_t^{t+m}$ is the union of $C_4^-$, $P_t$ and $P_{m+t}$ where an end vertex of $P_t$ is joined to one vertex of $C_4^-$,
and an end vertex of $P_{m+t}$ is joined to the opposite vertex of $C_4^-$.
\begin{lem}\label{lem11}
For integers $t\geqslant 1$, $k\geqslant 0$ and $m\geq 1$, $\mathrm{det}(H_t)$, $\mathrm{det}(H_{t}^{t+m})$,
$\mathrm{det}(D_{m})$, $\mathrm{det'}(H_t)$, $\mathrm{det'}(H_{t}^{t+m})$,
and $\mathrm{det'}(D_{m})$ are even.
\end{lem}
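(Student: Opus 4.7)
My plan is to reduce modulo $2$. The adjacency matrix of a signed graph has entries in $\{0, \pm 1\}$, which reduce modulo $2$ to the entries of the unsigned adjacency matrix of the underlying graph. Every coefficient of the characteristic polynomial is an integer polynomial in the matrix entries, so in particular $\det(\Gamma)$ and $\det'(\Gamma)$ have the same parity as for the underlying unsigned graph. It therefore suffices to prove that $\det$ and $\det'$ of the unsigned versions of $D_m$, $H_t$, and $H_t^{t+m}$ are even.

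For $D_m$ I would apply Lemma \ref{ddet} directly. The two leaves at the $K_{1,3}$-end of $D_m$ share a single common neighbor, so in the unsigned adjacency matrix their rows and columns are identical. Lemma \ref{ddet} then gives $\det(D_m)=0$ and $\det'(D_m)=2\det(P_{m-1})$, both manifestly even.

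For $H_t$ and $H_t^{t+m}$ I would use the pendant-vertex recurrence: if $v$ is a pendant of $G$ with neighbor $u$, then
\[
\phi(G,x)=x\,\phi(G-v,x)-\phi(G-v-u,x).
\]
Peeling the endpoint of a pendant path gives $\phi(H_t,x)=x\,\phi(H_{t-1},x)-\phi(H_{t-2},x)$ for $t\geq 2$ and an analogous identity for $H_t^{t+m}$. Evaluating at $x=0$ yields $\det(H_t)=-\det(H_{t-2})$, and induction from $\det(H_0)=\det(C_4)=0$ and $\det(H_1)=-\det(P_3)=0$ gives $\det(H_t)=0$ for every $t$. For $H_t^{t+m}$ the same peeling applied to the longer path reduces to the base cases $H_t$ and $H_t^{t+1}$; peeling the remaining length-one path in the latter leaves the spider $D_{t+3}$, so $\det(H_t^{t+1})=-\det(D_{t+3})=0$, and the induction again gives $\det(H_t^{t+m})=0$.

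For $\det'$ I would differentiate the pendant recurrence and evaluate at $x=0$. Because $\det$ vanishes on all the intermediate graphs that arise, the differentiated identity simplifies to $\phi'(G,0)=-\phi'(G-v-u,0)$, again reducing everything to base cases. A short computation gives $\phi'(C_4,0)=0$, $\phi'(H_1,0)=2$, and $\phi'(H_t^{t+1},0)=-\phi'(D_{t+3},0)=\pm 2\det(P_{t+2})$ via the $D_m$-case. All of these are even, and induction finishes the proof. The main obstacle is simply bookkeeping: linking the two-parameter family $H_t^{t+m}$ back to $H_t$ and to the $D_m$-case through the correct sequence of pendant removals, and making sure the parity propagates through every base case; each individual calculation is routine.
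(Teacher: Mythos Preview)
Your reduction modulo $2$ to the underlying unsigned graph is exactly the paper's first step, and your treatment of $D_m$ via Lemma~\ref{ddet} is the intended one (in fact the paper's written proof omits $D_m$ entirely). Where you diverge is for $H_t$ and $H_t^{t+m}$: you set up a pendant-path recursion and induct, whereas the paper simply observes that in the unsigned underlying graph the two $C_4$-vertices \emph{not} carrying a pendant path are each adjacent precisely to the other two $C_4$-vertices, so their rows are identical and Lemma~\ref{ddet} applies directly, exactly as you did for $D_m$. That single observation disposes of $\det$ and $\det'$ for all three families at once.

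Your inductive route is correct in spirit but considerably more laborious, and it contains a bookkeeping slip. The recursion $\phi(G,0)=-\phi(G-v-u,0)$ lowers the length of the peeled path by $2$, so the genuine base cases for the two-path family are the graphs with second path of length $0$ or $1$ --- that is, $H_t$ itself and the graph consisting of $C_4$ with the $t$-path on one side and a \emph{single} pendant on the opposite vertex --- not $H_t$ and $H_t^{t+1}$. Your remark that ``peeling the remaining length-one path leaves $D_{t+3}$'' is correct for that latter base graph but makes no sense for $H_t^{t+1}$ (whose two paths have lengths $t$ and $t+1$), so this is a mislabeling rather than a real gap. With the base cases stated correctly your argument goes through; but the one-line appeal to Lemma~\ref{ddet} via the repeated $C_4$-rows is clearly the cleaner proof.
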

\begin{proof}
Let $B$ be the adjacency matrix of the underlying unsigned graphs of $H_t$ or $H_{t}^{t+m}$.
Then $B$ contains two repeated rows (and columns), so by Lemma~\ref{ddet}
$\det(B)=0$ and $\det'(B)=2\det(B')$, so $\det'(B)$ is even.
On the other hand, the signed and the unsigned graph have equal adjacency matrices modulo $2$.
\end{proof}

\begin{thm}\label{eigH}
The eigenvalues of $H_{t}^{t+m}$ ($t\geq 1$, $m\geq 0$) are as follows:
The first type of eigenvalues are
$$2\mathrm{cos}\dfrac{(2i-1)\pi}{2k},~\text{for}~k=t+m+2,~i=1, \ldots , k.$$
The second type of eigenvalues are
$$2\mathrm{cos}\dfrac{(2i-1)\pi}{2t+4},~\text{for}~i=1, \ldots , t+2.$$
\end{thm}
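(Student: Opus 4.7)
Label the four vertices of the $C_4^-$ as $v_0,w,w',u_0$, with $v_0$ and $u_0$ opposite, and let $v_1,\ldots,v_t$ and $u_1,\ldots,u_{t+m}$ be the two pendant paths attached at $v_0$ and $u_0$, respectively. After a suitable switching I may assume that $w'u_0$ is the unique negative edge. For an eigenvector $f$ of eigenvalue $\lambda$, the equations at the four ``central'' vertices read
\begin{align*}
\lambda f(w)&=f(v_0)+f(u_0), & \lambda f(w')&=f(v_0)-f(u_0),\\
\lambda f(v_0)&=f(v_1)+f(w)+f(w'), & \lambda f(u_0)&=f(u_1)+f(w)-f(w'),
\end{align*}
together with the standard three-term recurrence $\lambda f(v_i)=f(v_{i-1})+f(v_{i+1})$ and the free-end condition $\lambda f(v_t)=f(v_{t-1})$ along each pendant path.

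The key substitution is $s:=f(w)+f(w')$ and $a:=f(w)-f(w')$. Summing and differencing the two $w$-equations gives $\lambda s=2f(v_0)$ and $\lambda a=2f(u_0)$, while the $v_0$- and $u_0$-equations rewrite as $\lambda f(v_0)=f(v_1)+s$ and $\lambda f(u_0)=f(u_1)+a$. The full eigenvalue problem therefore decouples into a \emph{top system} in the $t+2$ unknowns $s,f(v_0),\ldots,f(v_t)$ and a \emph{bottom system} in the $t+m+2$ unknowns $a,f(u_0),\ldots,f(u_{t+m})$. Conversely, any solution of the top system extends to an eigenvector of $H_t^{t+m}$ by setting $a=0$, $f(u_j)=0$ for all $j$, and $f(w)=f(w')=s/2$; symmetrically for the bottom system. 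Since every eigenvector $f$ recovers uniquely via $f(w)=(s+a)/2$, $f(w')=(s-a)/2$, this decomposition is a bijection, and the two eigenspace contributions have total dimension $(t+2)+(t+m+2)=n$, so their eigenvalues exhaust the spectrum.

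To solve the top system, write $\lambda=2\cos\theta$ and use the ansatz $f(v_i)=C\sin\bigl((t+1-i)\theta\bigr)$, which automatically satisfies the three-term recurrence and the free-end condition. Eliminating $s$ between $\lambda s=2f(v_0)$ and $\lambda f(v_0)=f(v_1)+s$ yields the single boundary equation $(\lambda^2-2)f(v_0)=\lambda f(v_1)$. Using $\lambda^2-2=2\cos(2\theta)$ and the product-to-sum identities, this reduces to $\sin((t+3)\theta)=\sin((t+1)\theta)$, i.e.\ $2\cos((t+2)\theta)\sin\theta=0$. Since $\sin\theta=0$ would force $\lambda=\pm2$, which is excluded for $H_t^{t+m}$, the valid values are $\theta=(2i-1)\pi/(2t+4)$ for $i=1,\ldots,t+2$, producing the second family. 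The bottom system has identical shape with $t$ replaced by $t+m$, and yields the first family with $k=t+m+2$.

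The one delicate point is the decoupling step: one must verify that the ``extension by zero'' map really lands in the eigenspace (every edge equation, including those at $w,w',v_0,u_0$, has to check out) and that the splitting $f\mapsto(f^+,f^-)$ is a bijection between eigenvectors of $H_t^{t+m}$ and pairs of top/bottom solutions. Both verifications are direct. The Chebyshev step then handles all $\lambda$ uniformly --- including $\lambda=0$, where the ansatz simply becomes $f(v_i)=C\sin((t+1-i)\pi/2)$ --- and the count $(t+2)+(t+m+2)=n$ guarantees completeness of the list.
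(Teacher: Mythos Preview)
Your argument is correct and takes a genuinely different route from the paper. The paper exploits the bipartite structure: writing $A=\left[\begin{smallmatrix}O&N\\N^\top&O\end{smallmatrix}\right]$, it computes $A^2$ and observes that, for a suitable labeling, $NN^\top$ is block-diagonal with two tridiagonal blocks whose diagonals are $[3,2,\ldots,2,1]$ or $[3,2,\ldots,2]$; the eigenvalues of these blocks are quoted from a reference on tridiagonal matrices, and a final trigonometric identity passes from $\mathrm{Spec}(A^2)$ back to $\mathrm{Spec}(A)$.

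You instead exploit the reflection exchanging the two degree-$2$ cycle vertices $w,w'$: the change of variables $s=f(w)+f(w')$, $a=f(w)-f(w')$ decouples the eigenvector equations into two independent path-like systems of sizes $t+2$ and $t+m+2$, which you then solve directly with the Chebyshev ansatz. This is more self-contained (no external tridiagonal formula, no square-root/sign step), and it makes transparent why the spectrum splits into exactly two cosine families. The paper's approach, on the other hand, is more mechanical and generalizes readily to any bipartite signed graph whose $NN^\top$ happens to block-diagonalize. One cosmetic point: your phrase ``which is excluded for $H_t^{t+m}$'' reads as if you are using the conclusion; it is cleaner to say that $\sin\theta=0$ makes the ansatz vanish identically, so it yields no eigenvector, and then the $t+2$ distinct values you do find exhaust the $(t+2)\times(t+2)$ top system by counting.
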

\begin{proof}
Consider a labeling of $H_{t}^{t+m}$, $t$ and $m$ are even or odd, as presented in Figs.~\ref{H2,1} and \ref{H2,2}. Note that the sets $\{v_1, v_2, \ldots\}$ and $\{u_1, u_2, \ldots\}$ in Figs.~\ref{H2,1} and \ref{H2,2}, are two appropriate partitions of signed bipartite graph $H_{t}^{t+m}$.
Hence, we can write the adjacency matrix $A$ of $H_{t}^{t+m}$ as follows:

\begin{equation*}
A =
\begin{bmatrix}
O & N \\
N^T & O\\
\end{bmatrix}.
\end{equation*}
Then it is seen that
\begin{equation*}
A^2 =
\begin{bmatrix}
NN^T & O \\
O & N^TN \\
\end{bmatrix}.
\end{equation*}
We can write $NN^T$ as the following matrix
\begin{equation*}
NN^T=
\begin{bmatrix}
K & O \\
O & L \\
\end{bmatrix},
\end{equation*}
where $K$ and $L$ are tridiagonal matrices with all-ones on the upper and lower diagonal,
and $\left[ 3, 2, 2, \ldots , 2, 1\right ]$ or $\left[ 3, 2, 2, \ldots , 2 \right ]$ on the diagonal.

\begin{figure}[!htb]
\minipage{0.80\textwidth}
\includegraphics[width=\linewidth]{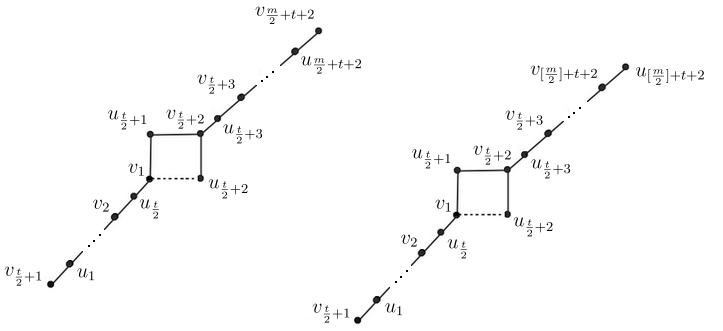}
 \caption{Labeling of $H_{t}^{t+m}$ for even $t$}\label{H2,1}
\endminipage
\end{figure}
\newpage
\begin{figure}[!htb]
\minipage{0.80\textwidth}
\includegraphics[width=\linewidth]{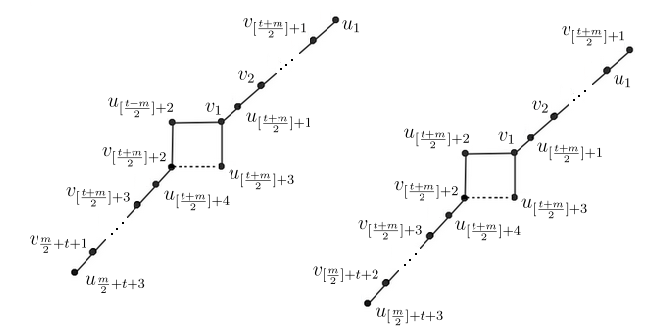}
 \caption{Labeling of $H_{t}^{t+m}$ for odd $t$}\label{H2,2}
\endminipage
\end{figure}

Assume that $K$ and $L$ are square matrices of size $s$ and $r$, respectively.
If $t$ is even, then $s=\frac{t}{2}+1$ and $r=[\frac{t+m}{2}]+1$.
Otherwise $s=[\frac{t+m}{2}]+1$ and $r=\lceil \frac{t+1}{2}\rceil$.
Moreover, by \cite[Theorems 2, 3]{eigtri}, we can obtain the eigenvalues of $K$ and $L$ using the following equalities, respectively.
$$\lambda_j = 2 + 2 \mathrm{cos}\dfrac{(2j-1)\pi}{2s}, ~j=1, 2, \ldots, s,$$
$$\lambda_i= 2 + 2 \mathrm{cos}\dfrac{(2i-1)\pi}{2r+1}, ~i=1, 2, \ldots, r.$$
Now, using a simple trigonometric relation the assertion is proved.
\end{proof}

\section{Paths of even order}

Suppose $n$ is even.
By Lemma \ref{detpn} $\det(\Gamma)=\det(P_n)=(-1)^{\frac{n}{2}}$.
Therefore each component of $\Gamma$ has determinant $+1$ or $-1$,
hence Graphs (a), (c), (e), (h), (i), (j), (m), (o), and (q) ($P_k$ with $k$ even) given in
Fig.~\ref{1} are the only possible components of $\Gamma$.

\begin{lem}\label{cos.spec}
The spectrum of Graphs (c), (e) and (i) in Fig.~\ref{1} are as follows:
\begin{align*}
\mathrm{Spec}(c)&=\big\{2\mathrm{cos}\dfrac{k\pi}{24}:~k=1, 5, 7, 11, 13, 17, 19, 23\big\},\\
\mathrm{Spec}(e)&=\big\{2\mathrm{cos}\dfrac{k\pi}{20}:~k=1, 3, 7, 9, 11, 13, 17, 19\big\},\\
\mathrm{Spec}(i)&=\big\{2\mathrm{cos}\dfrac{k\pi}{18}:~k=1, 3, 5, 7, 11, 13, 15, 17\big\}.
\end{align*}
\end{lem}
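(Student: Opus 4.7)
My plan is to verify each of the three spectra by a direct computation that exploits the bipartite structure shared by all components of $\Gamma$. By Observation~\ref{obs2}(1), any component of $\Gamma$ has no odd cycle and hence is bipartite, so after choosing a suitable labeling the adjacency matrix of each of Graphs~(c), (e), (i) takes the block form
\[
A=\begin{pmatrix} 0 & N \\ N^{\top} & 0 \end{pmatrix}.
\]
Consequently the nonzero eigenvalues of $A$ come in pairs $\pm\sqrt{\mu}$, where $\mu$ runs over the positive eigenvalues of $M:=NN^{\top}$. Each of the three claimed spectra is indeed symmetric under negation, consistent with this structure, so it suffices to pin down the spectrum of $M$, a symmetric matrix of size at most $4$ (since each graph has $8$ vertices split evenly between the two sides of the bipartition).

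For each of the three graphs I would read $N$ off the corresponding picture in Fig.~\ref{1} (with dashed edges contributing $-1$), form $M=NN^{\top}$, and compute its characteristic polynomial $p_M(x)$ by cofactor expansion. Using the elementary identity $4\cos^{2}(k\pi/N)=2+2\cos(2k\pi/N)$, the squared candidates $\mu_k:=4\cos^{2}(k\pi/N)$ from the listed values translate into the explicit numbers $2+2\cos(2k\pi/N)$, which are easy to evaluate. Substituting each $\mu_k$ into $p_M$ and showing it vanishes, together with the fact that there are exactly $4$ distinct such $\mu_k$ in each case and $\deg p_M=4$, then yields the full spectrum of $M$, and via the bipartite reduction the full spectrum of $A$.

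The main obstacle is recognizing the trigonometric form of the roots of $p_M$. I expect $M$ in each case to be \emph{nearly} tridiagonal, differing from a standard tridiagonal matrix by a low-rank correction coming from the unbalanced $4$-cycle; this should allow a clean Chebyshev-type factorization in the spirit of the proof of Theorem~\ref{eigH} (and of the formulas used there from \cite{eigtri}). An alternative route is to exploit a graph automorphism --- for instance, the reflection of the $C_{4}^{-}$ swapping its two non-adjacent vertices --- to block-diagonalize $A$ into two smaller symmetric matrices whose spectra can be computed separately and then identified as the two halves of the claimed list. Either way, once the right form of $M$ is in hand the verification becomes routine, and the only genuine work is the bookkeeping of writing down $N$ correctly from the pictures.
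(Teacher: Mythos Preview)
The paper states Lemma~\ref{cos.spec} without proof, treating it as a routine computational fact. Your proposed route via the bipartite block form $A=\left(\begin{smallmatrix}0&N\\N^{\top}&0\end{smallmatrix}\right)$ and the $4\times 4$ matrix $M=NN^{\top}$ is a perfectly valid way to carry out that verification, and is exactly in the spirit of the proof of Theorem~\ref{eigH}; the symmetry of each listed spectrum under $k\mapsto N-k$ (with no $k=N/2$ term) confirms that all three graphs are bipartite with an even $4{+}4$ split and no zero eigenvalue, so the reduction to $M$ loses nothing. Either finishing move you mention --- recognizing $M$ as a perturbed tridiagonal matrix amenable to Chebyshev-type formulas, or using the obvious reflection of the $C_4^{-}$ to block-diagonalize $A$ into two $4\times 4$ pieces --- will work, and since $\deg p_M=4$ one can in any event just expand $p_M$ and check the four candidate roots directly.
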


By \cite[Theorem 3.1.3]{brou-haem} and Lemma \ref{cos.spec}, Graphs (c), (e), (i) and (m) cannot be a component of a signed graph cospectral with $P_n$ for any even $n$.
Therefore the only graphs which can occur as a component of $\Gamma$ for even $n$ are the
graphs presented in Fig.~\ref{components}.
\begin{figure}[!htb]
\minipage{0.85\textwidth}
\includegraphics[width=\linewidth]{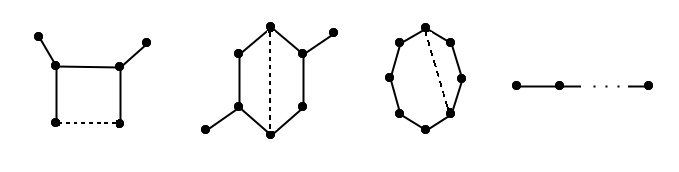}
 \caption{All possible components of $\Gamma$}\label{components}
\endminipage
\end{figure}

We note that the second and the third graph in Fig.~\ref{components} are cospectral.
Therefore, at most one of them can be a component of $\Gamma$.\\
\begin{lem}\label{lem.1}
If $n$ is even and $\Gamma$ has two connected components then $n=8$.
Moreover, $\Gamma$ is switching isomorphic with the disjoint union of $P_2$ and Graph~(a)
from Fig.~\ref{1}.
\end{lem}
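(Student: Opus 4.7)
The plan is to combine a simple counting argument, the structural restrictions from the preceding analysis, and the walk-count identities of Lemma~\ref{walk}.

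Write $\Gamma=\Gamma_1\sqcup\Gamma_2$. Since $|V(\Gamma)|-|E(\Gamma)|=1$ and $\Gamma$ has two components, exactly one of $\Gamma_1,\Gamma_2$ is a tree and the other contains exactly one cycle. By the discussion preceding Fig.~\ref{components}, the non-tree component must be one of the graphs in Fig.~\ref{components} containing an unbalanced $C_4$, while the tree component is either an even path or the exceptional tree present in Fig.~\ref{components}. In particular the non-tree component lies in a finite explicit list of small graphs, so only a finite case analysis remains.

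I would then use two filters to eliminate almost all cases. The first is spectral: every eigenvalue of $P_n$ is simple and of the form $2\cos(i\pi/(n+1))$, so the spectra of $\Gamma_1$ and $\Gamma_2$ must be disjoint subsets of $\{2\cos(i\pi/(n+1)):1\le i\le n\}$. The eigenvalues of the candidate non-tree components are explicit expressions $2\cos(k\pi/d)$ with small fixed denominators $d$, so compatibility forces $n+1$ to be a multiple of $d$ and restricts $n$ to a short list. The second filter is the walk-count equation
\[
W_4(\Gamma_1)+W_4(\Gamma_2)=6n-10,\qquad W_6(\Gamma_1)+W_6(\Gamma_2)=20n-44,
\]
from Lemma~\ref{walk} together with additivity of $W_k$ over components. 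When the tree component is $P_k$, substituting $W_4(P_k)=6k-10$ and $W_6(P_k)=20k-44$ reduces these to the clean conditions $W_4(\Gamma_2)=6|V(\Gamma_2)|$ and $W_6(\Gamma_2)=20|V(\Gamma_2)|$ on the non-tree component; when the tree component is the exceptional tree, an analogous arithmetic check determines $n$.

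Running this case analysis leaves exactly one surviving configuration, namely $n=8$ with tree component $P_2$ and non-tree component Graph~(a) of Fig.~\ref{1}. A direct verification using Lemma~\ref{spec} then confirms that $P_2\sqcup(\text{Graph (a)})$ is cospectral with $P_8$, completing the proof. The main obstacle is the enumeration over the admissible pairs of components; the walk-count identities and the simple-eigenvalue requirement are the tools that make each individual pairing easy to discard or to pin down.
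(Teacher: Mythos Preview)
Your approach is essentially the paper's, though you over-engineer it. The counting argument forces one component to be a tree and the other unicyclic; since Graphs~(h) and~(j) in Fig.~\ref{components} are bicyclic (eight vertices, nine edges), the only unicyclic option left is Graph~(a) on six vertices. The paper therefore tests just the single family Graph~(a)$\,\sqcup\,P_{n-6}$: here $W_4$ always matches, while $W_6$ forces $n=8$, and a direct check confirms cospectrality. Your additional spectral filter via denominators $d$ is never needed. You are right to flag the exceptional tree~(o) as a possible tree partner for Graph~(a) (giving $n=14$); the paper's wording glosses over this single extra case, but the same $W_6$ comparison disposes of it immediately.

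One technical point to watch when you formalize: your ``clean condition'' $W_6(\Gamma_2)=20|V(\Gamma_2)|$ relies on $W_6(P_k)=20k-44$, which by Lemma~\ref{walk} holds only for $k\ge 3$. The surviving configuration has $k=2$ with $W_6(P_2)=2$, so if you plug in the generic formula uniformly you will discard exactly the case that works. Handle $k=2$ separately.
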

\begin{proof}
Based on the possible components for $\Gamma$ in Fig. \ref{components},
we have only one type of $\Gamma$ with two components, being Graph~(a) and $P_{n-6}$.
By considering the values of $W_{4}(\Gamma)$ and $W_{6}(\Gamma)$ and using Lemma \ref{walk}, we have
$W_4(\Gamma) = W_4(P_n)$ for each $n$, but $W_6(\Gamma) \neq W_6(P_n)$ for $n\neq 8$.
If $n=8$ it is easily verified that $\Gamma$ and $P_8$ are cospectral.
\end{proof}

\begin{lem}\label{lem.2}
If $n$ is even and $\Gamma$ has three connected components then $n=14$.
Moreover, if $n=14$, then $\Gamma$ is switching isomorphic with the disjoint union of either $P_2$, $P_4$ and Graph~(h),
or $P_2$, $P_4$ and Graph~(j) in Fig. \ref{1}.
\end{lem}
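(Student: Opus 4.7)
The plan is to combine a cyclomatic count of the three components with the walk-count identities $W_4$ and $W_6$ from Lemma~\ref{walk}, using the list of admissible components from Fig.~\ref{components}. For components with $n_i$ vertices and $m_i$ edges, the cyclomatic numbers $c_i=m_i-n_i+1$ sum to $(n-1)-n+3=2$, and the preceding theorem forces $\sum c_i\geq 1$. So exactly two structural splits survive: two tree components together with one bicyclic component, or one tree component together with two unicyclic components.

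For even $n$ the tree components must be even-order paths, and Lemma~\ref{lem.1} already identifies Graph~(a) as the only unicyclic candidate among the graphs of Fig.~\ref{components}, leaving (h), (j), (o) as the bicyclic candidates. Because $P_n$ has a simple spectrum, $\Gamma$ cannot contain two isomorphic components, nor two mutually cospectral components (such as the pair noted after Fig.~\ref{components}). This excludes the one-tree configuration outright: it would require two copies of Graph~(a). So $\Gamma=P_{k_1}\cup P_{k_2}\cup H$ with $k_1<k_2$ even and $H\in\{\text{(h)},\text{(j)},\text{(o)}\}$, and $n=k_1+k_2+|V(H)|$.

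To finish, I would compute $W_4(H)$ and $W_6(H)$ directly from the adjacency matrices of the three bicyclic candidates. Using Lemma~\ref{walk} and the relation $n=k_1+k_2+|V(H)|$, the identity $W_4(\Gamma)=W_4(P_n)$ reduces to a single numerical condition on $W_4(H)$ depending only on $|V(H)|$, which I expect eliminates (o) and leaves $H\in\{\text{(h)},\text{(j)}\}$. The $W_6$ identity then must be split according to whether $k_1\geq 4$ or $k_1=2$, the latter using the exceptional value $W_6(P_2)=2$ from Lemma~\ref{walk}; the arithmetic should force $k_1=2$, $k_2=4$, and hence $n=14$. Cospectrality of $P_2\cup P_4\cup H$ with $P_{14}$ is then checked by matching eigenvalue lists via Lemma~\ref{spec}. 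The main obstacle I foresee is the ad hoc computation of $W_4$, $W_6$, and the explicit spectra for the bicyclic candidates (h), (j), (o), together with the careful case split of the $W_6$ identity around $k_1=2$; once those numerical ingredients are tabulated, the elimination is mechanical.
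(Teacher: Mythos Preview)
Your overall strategy---cyclomatic bookkeeping on the three components followed by elimination via $W_4$ and $W_6$---is precisely the paper's approach, and writing out the split $(2,0,0)$ versus $(1,1,0)$ explicitly is a clean way to organise it. There is, however, a genuine gap in your component classification: Graph~(o) of Fig.~\ref{1} is not bicyclic; it is the tree $E_8$. The paper says this explicitly in the caption of Fig.~\ref{counterexamples}, and the discussion preceding Fig.~\ref{components} groups (m), (n), (o), (p), (q) with the graphs whose spectra come from \cite[Theorem~3.1.3]{brou-haem}, i.e.\ the Dynkin trees. Hence your assertion that ``the tree components must be even-order paths'' is false, and your case analysis omits the configuration $H\cup E_8\cup P_k$ with $H\in\{\text{(h)},\text{(j)}\}$. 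Your planned $W_4$-elimination of (o) ``as a bicyclic candidate'' is addressing a non-case.

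The repair uses exactly your tool. For any tree one has $W_4=2\sum_v d_v^{2}-2|E|$, so $W_4(E_8)=42$ while $W_4(P_8)=38$. In the two-path configuration $H\cup P_{k_1}\cup P_{k_2}$ the identity $W_4(\Gamma)=W_4(P_n)$ reduces (via Lemma~\ref{walk}) to $W_4(H)=6|V(H)|+10$, whereas in $H\cup E_8\cup P_k$ it reduces to $W_4(H)=6|V(H)|+6$. Graphs~(h) and~(j) satisfy the former (this is exactly why they survive the $W_4$ check in the paper's argument), hence not the latter, and the $E_8$ case is eliminated at the $W_4$ step. You should insert this check explicitly rather than assume the trees are paths; once it is in place, the remainder of your plan goes through and coincides with the paper's proof.
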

\begin{proof}
After considering all cases of the components of $\Gamma$ in Fig.~\ref{components},
we obtain two types of $\Gamma$ with three components given in Fig.~\ref{3-component}.
These two possible types of $\Gamma$ are similar because the spectrum of the first components are the same.
Hence, it is sufficient to verify one of these two cases for $\Gamma$.
We note that when $\Gamma$ contains two paths, then the orders of the paths are different because
otherwise the multiplicity of some of the eigenvalues will be at least two.
We have $W_4(\Gamma)=W_4(P_n)$, but  $W_6(\Gamma)\neq W_6(P_n)$, unless $n=14$.
By an easy inspection, we conclude that if $n=14$ and the path components have orders $2$ and $4$,
then $\mathrm{Spec}(\Gamma)=\mathrm{Spec}(P_{14})$.
\end{proof}
\vspace{-0.2 cm}
\begin{figure}[!htb]
\minipage{0.85\textwidth}
\includegraphics[width=\linewidth]{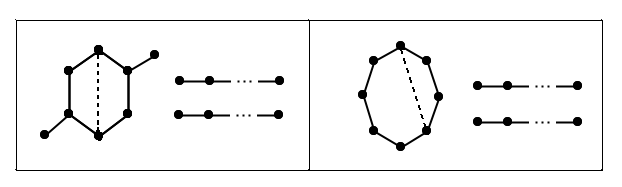}
 \caption{All types of $\Gamma$ with three components and $n$ even}\label{3-component}
\endminipage
\end{figure}
\vspace{0.2 cm}

\begin{lem}\label{lem.3}
If $n$ is even, then $\Gamma$ has at most three components.
\end{lem}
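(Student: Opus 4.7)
The plan is a two-step argument: a cycle-rank count to force $c\leq 4$, followed by a $W_6$-computation to rule out $c=4$.

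First I would use that $\Gamma$ has $n$ vertices and $n-1$ edges, so its cycle rank $|E|-|V|+c$ equals $c-1$. The only non-tree components in Fig.~\ref{components} are $(a)$, $(h)$, $(j)$; from the configurations in Lemmas~\ref{lem.1} and \ref{lem.2}, $(a)$ has rank $1$ (six vertices, six edges) and each of $(h), (j)$ has rank $2$ (eight vertices, nine edges). Since $\mathrm{Spec}(\Gamma)=\mathrm{Spec}(P_n)$ is simple, no two components may share an eigenvalue, so $\Gamma$ has at most one copy of $(a)$ and, because $(h)$ and $(j)$ are cospectral, at most one representative of $\{(h),(j)\}$ in total. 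Summing the possible rank contributions, $c-1 \leq 1+2=3$, so $c\leq 4$.

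Next I would rule out $c=4$. The rank count then forces $\Gamma$ to consist of one copy of $(a)$, one element of $\{(h),(j)\}$, and two tree components $T_1, T_2$ with $|T_1|+|T_2|=n-14$. From Lemma~\ref{lem.1} at $n=8$, $W_6((a))=W_6(P_8)-W_6(P_2)=114$, and from Lemma~\ref{lem.2} at $n=14$, $W_6((h))=W_6((j))=W_6(P_{14})-W_6(P_4)-W_6(P_2)=198$. Substituting into $W_6(\Gamma)=W_6(P_n)=20n-44$ and applying Lemma~\ref{walk} to the path components reduces the required identity to
\[
W_6(T_1)+W_6(T_2) - 20\bigl(|T_1|+|T_2|\bigr) = -76.
\]

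For paths, $W_6(P_k)-20k$ equals $-44$ when $k\geq 3$ and $-38$ when $k=2$, so the achievable sums from two path components are $-88$, $-82$, or $-76$; the last case occurs only for $T_1=T_2=P_2$, which yields two cospectral components and hence a non-simple eigenvalue of $\Gamma$. The only remaining possibility is that one of the $T_i$ is a non-path tree from Fig.~\ref{components}, which is eliminated by a direct $W_6$ calculation on that tree. The main obstacle is not the rank bound (which is immediate) but this last finite check over the non-path tree types; in particular one should note that $W_4$ alone cannot finish the job, since a computation shows $W_4(\Gamma)=W_4(P_n)$ is satisfied automatically by the configuration $(a)+(h)+T_1+T_2$.
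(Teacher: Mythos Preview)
Your argument is correct and follows the same strategy as the paper: enumerate the possible configurations with more than three components and eliminate them via $W_6$. The paper compresses your cycle-rank bound into the single sentence ``Using Fig.~\ref{components} we see that there are only the two types for $\Gamma$ shown in Fig.~\ref{4-component}'' (both types being $(a)$ together with one of $(h),(j)$ and two even paths) and then appeals to $W_6$ exactly as you do. Your extraction of $W_6((a))$ and $W_6((h))$ from the cospectrality established in Lemmas~\ref{lem.1} and~\ref{lem.2}, and your identification of the borderline case $T_1=T_2=P_2$ (which meets the $W_6$ identity but is excluded by simplicity of $\mathrm{Spec}(P_n)$), make the computation more explicit than the paper's version.
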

\begin{proof}
Assume that $\Gamma$ has more than three components.
Using Fig.~\ref{components} we see that there are only the two types for $\Gamma$ shown in Fig. \ref{4-component}.
Similar to the proof of Lemmas \ref{lem.1} and \ref{lem.2}, it is sufficient to determine $W_6$ for $\Gamma$ and $P_n$.
In each case we achieve a contradiction.
\end{proof}
\begin{figure}[!htb]
\minipage{0.90\textwidth}
\includegraphics[width=\linewidth]{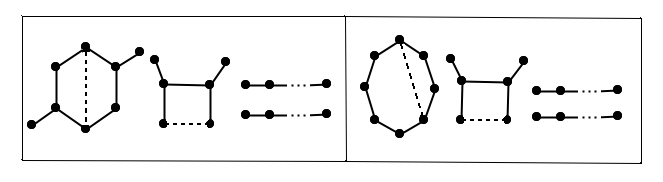}
 \caption{All types of $\Gamma$ with four components and $n$ even}\label{4-component}
\endminipage
\end{figure}

\begin{thm}\label{n-even}
Suppose $n$ is even.
Then $P_n$ is determined by the spectrum if and only if $n\neq 8, 14$.
\end{thm}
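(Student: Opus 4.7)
The plan is to reduce the theorem to a direct amalgamation of Lemmas~\ref{lem.1}, \ref{lem.2}, and \ref{lem.3}, since essentially all the real work is already packaged there. The statement is an ``if and only if,'' so I would split the argument into two short halves.

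For the ``only if'' direction (equivalently, the exceptional values), I would simply invoke the explicit constructions already provided. Lemma~\ref{lem.1} exhibits a signed graph on $8$ vertices -- the disjoint union of $P_2$ with Graph~(a) of Fig.~\ref{1} -- which is cospectral with $P_8$ but has two components and hence cannot be switching isomorphic to $P_8$. Similarly, Lemma~\ref{lem.2} exhibits two signed graphs on $14$ vertices (the disjoint unions of $P_2$, $P_4$, and Graph~(h) or (j)), cospectral with $P_{14}$ but not switching isomorphic to it. These witnesses establish that $P_n$ is \emph{not} determined by its spectrum when $n=8$ or $n=14$.

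For the ``if'' direction, let $n$ be even with $n\notin\{8,14\}$, and suppose for contradiction that $\Gamma$ is a signed graph cospectral with $P_n$ but not switching isomorphic to it. As noted at the opening of Section~3, $\Gamma$ then has $n$ vertices, $n-1$ edges, and at least two components. Lemma~\ref{lem.3} rules out four or more components (for even $n$), so the number of components of $\Gamma$ is either $2$ or $3$. If it is $2$, Lemma~\ref{lem.1} forces $n=8$; if it is $3$, Lemma~\ref{lem.2} forces $n=14$. Either way we contradict $n\notin\{8,14\}$, so no such $\Gamma$ exists, and $P_n$ is determined by its spectrum.

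Since the heavy lifting is already contained in the prior lemmas -- identifying the admissible component list (Theorem~\ref{main}), reducing to the smaller family in Fig.~\ref{components} via the determinant identity from Lemma~\ref{detpn}(a), and then using the walk counts $W_4$ and $W_6$ from Lemma~\ref{walk} to pin down $n$ -- there is no genuine obstacle to overcome at this final stage. The only thing to be mindful of is to quote the standing hypothesis of Section~3 that $\Gamma$ has at least two components (so the case of one component is already excluded by definition of the counterexample setting), so that the three-way dichotomy ``$2$ components, $3$ components, or $\geq 4$ components'' matches cleanly with the three lemmas being combined.
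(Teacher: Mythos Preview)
Your proposal is correct and follows exactly the paper's approach: the paper's proof is the single line ``It follows from Lemmas~\ref{lem.1}, \ref{lem.2} and \ref{lem.3},'' and you have simply unpacked this into the obvious two-direction argument. There is nothing to add or correct.
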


\begin{proof}
It follows from Lemmas \ref{lem.1}, \ref{lem.2} and \ref{lem.3}.
\end{proof}

\section{Paths of the odd order}

\begin{thm}\label{1nod4}
Suppose $n\equiv 1~(\mathrm{mod}~4)$.
Then $P_n$ is determined by the spectrum if and only if $n\not\in\{13, 17, 29\}$.
\end{thm}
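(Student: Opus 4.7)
The plan is to mirror the argument used for even $n$ in Section~4, replacing the determinant identity $\det(P_n)=(-1)^{n/2}$ with the two invariants supplied by Lemma~\ref{detpn}(b): since $n$ is odd, $\det(\Gamma)=\det(P_n)=0$, and since $n\equiv 1\pmod 4$, $\det'(\Gamma)=(n+1)/2$ is an odd integer. Writing the characteristic polynomial of $\Gamma$ as a product over its components and differentiating at $0$ yields
\[
\det'(\Gamma)=\sum_{H}\det'(H)\prod_{H'\neq H}\det(H'),
\]
and the only way this can be a nonzero odd number while $\prod_H\det(H)=0$ is that exactly one component $H^\ast$ has $\det(H^\ast)=0$, that $\det'(H^\ast)$ is odd, and that every other component has an odd determinant. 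Since the parity of $\det$ and $\det'$ depends only on the underlying unsigned adjacency matrix, Lemma~\ref{lem11} immediately forbids any component of type $H_t$, $H_t^{t+m}$, or $D_m$.

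Combining this with the theorem preceding Lemma~\ref{lem11}, which forces $\Gamma$ to contain an unbalanced $4$-cycle, I conclude that the single cycle-bearing component of $\Gamma$ must come from the \emph{finite} list (a)--(j) of Fig.~\ref{1}, while the remaining components lie among the trees (m), (n), (o) or are paths $P_m$. For each such finite graph I would record the parity of $\det$ and $\det'$ and its fixed contribution to $W_4$ and $W_6$. Among paths, $P_m$ with $m$ even has $\det=\pm 1$, which is odd, while $P_m$ with $m$ odd has $\det=0$ and $\det'=(m+1)/2$, so such a path may play the role of $H^\ast$ only when $m\equiv 1\pmod 4$. As in the even case, two paths of the same order are ruled out because they would create a repeated eigenvalue.

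The next step is to impose the identities of Lemma~\ref{walk}: $W_4(\Gamma)=6n-10$ and $W_6(\Gamma)=20n-44$. Each $W_k$ is additive over components and linear in the lengths of the path components, while the contribution of every candidate cycle-component is a fixed integer. Together with $\sum_i|V(H_i)|=n$, these yield a small linear system whose integer solutions can be listed for each choice of cycle-component, and the parity conditions further prune the list. For each surviving candidate I would test cospectrality with $P_n$ using Lemma~\ref{spec}, Lemma~\ref{cos.spec}, and Theorem~\ref{eigH}, whose trigonometric formulas make the eigenvalue matching tractable. This case analysis should collapse to exactly three nontrivial solutions, namely $n=13$, $17$, $29$, each accompanied by an explicit cospectral mate, which gives both directions of the theorem.

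The hard part will be this final step: uniformly ruling out \emph{arbitrarily large} $n\equiv 1\pmod 4$. The parity plus walk-count analysis must be tight enough that the set of admissible $\Gamma$ is bounded independently of $n$; if a one-parameter family of candidate $\Gamma$'s slips through, I would have to dispose of it with an additional invariant, such as $W_8$ or a sharper comparison of the trigonometric eigenvalue formulas afforded by Lemmas~\ref{spec}, \ref{cos.spec} and Theorem~\ref{eigH}. Ensuring that precisely $13$, $17$, and $29$ emerge as the exceptional values, rather than slipping through as artifacts of a loose count, is the main delicacy.
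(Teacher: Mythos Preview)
Your opening matches the paper exactly: $\det(\Gamma)=0$ together with $\det'(\Gamma)=(n+1)/2$ odd forces exactly one singular component with odd $\det'$ and all others with odd determinant, and Lemma~\ref{lem11} then eliminates the three infinite families $H_t$, $H_t^{t+m}$, $D_m$. From this point the paper diverges from your plan, and in a way that dissolves precisely the difficulty you flag as the hard part.

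Instead of invoking $W_4$ and $W_6$, the paper observes that the largest eigenvalue $\lambda_1(P_n)$ must equal $\lambda_1(H)$ for some component $H$ of $\Gamma$. Since $\lambda_1(P_k)<\lambda_1(P_n)$ for every $k<n$, no path component can achieve this maximum, so $H$ must lie in the \emph{finite} residual list of Fig.~\ref{1} (the sporadic graphs (a)--(j) and (m), (n), (o)). The maximum of $\lambda_1$ over this finite list is at most $\lambda_1(P_{29})$; hence for $n\geq 33$ no component can match $\lambda_1(P_n)$, and $P_n$ is determined by its spectrum. This one observation replaces your entire walk-count linear system and removes any concern about one-parameter families of candidates slipping through.

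What remains is the handful of cases $n\in\{5,9,13,17,21,25,29\}$. The paper dispatches $n=5,9$ by inspection, handles $n=21,25$ by noting that $\det'(P_n)\in\{11,13\}$ is not attained as $\det$ or $\det'$ of any component on the residual list, and exhibits explicit cospectral mates for $n=13,17,29$ in Fig.~\ref{counterexamples}. Your proposed route through $W_4$, $W_6$, and the trigonometric eigenvalue formulas would work for these finitely many values, but it is the largest-eigenvalue bound---not the walk counts---that makes the problem finite in the first place.
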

\begin{proof}
Since $n$ is odd, $\det(\Gamma)=0$, and exactly one component $H$ of $\Gamma$ has an eigenvalue $0$.
The product of all other eigenvalues of $\Gamma$ equals $\det'(\Gamma)=(n+1)/2$ by Lemma \ref{detpn}.
Since $(n+1)/2$ is odd, $\det'(H)$ is odd, and every component different from $H$ has an odd determinant.
Hence, by Lemma \ref{lem11}, the possible candidates do not include Graphs~(k), (l) and (p) in given Fig.~\ref{1}.
So, there is only a small list of possible components of $\Gamma$.
Clearly $\lambda_1(P_n)$ is equal to $\lambda_1(H)$ for one of the components of $\Gamma$.
Since $\lambda_1(P_k)<\lambda_1(P_n)$ when $k<n$, $H\neq P_k$,
and the largest eigenvalue of each of the other possible components is at most $\lambda_1(P_{29})$.
Therefore $P_n$ is determined by the spectrum when $n\geq 33$.

For $n=5, 9$, it is easy to check that $P_n$ is determined by the spectrum.
If $n=21, 25$, then $\det'(P_n)=11, 13$ respectively.
But none of the components $H$ in Fig.~\ref{1} (except $P_{21}$ and $P_{25}$) has $\det(H)$, or $\det'(H)$
equal to $11$ or $13$.
Hence, $P_{21}$ and $P_{25}$ are determined by their spectrums.
Furthermore, we give graphs cospectral with $P_{13}$, $P_{17}$ and $P_{29}$ in Fig.~\ref{counterexamples}.
\end{proof}
\begin{thm}\label{odd case2}
Let $n=4k+3$ for some integer $k\geq 1$.
Then there exists a graph $\Gamma$ which is cospectral but not switching isomorphic with $P_n$.
\end{thm}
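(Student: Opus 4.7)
The plan is to exhibit an explicit cospectral mate. For every $k\ge 1$ set
\[
\Gamma \;=\; H_{k-1}^{2k} \;\cup\; P_k ,
\]
where for $k=1$ the expression $H_0^{2}$ is to be read as $H_2$, i.e.\ $C_4^-$ with a pendant $P_2$ attached. In that boundary case Theorem~\ref{eigH} is being applied with $t=0$; the bipartite-matrix argument in its proof carries over verbatim and the formulas remain valid (alternatively one can verify $\mathrm{Spec}(H_2)$ by hand from the $3\times 3$ block $NN^{\top}$). Counting, $H_{k-1}^{2k}$ has $3k+3$ vertices and, having a single cycle, the same number of edges, so $\Gamma$ has $4k+3=n$ vertices and $4k+2=n-1$ edges. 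Since $\Gamma$ is disconnected it cannot be switching-isomorphic to the connected path $P_n$.

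To verify $\mathrm{Spec}(\Gamma)=\mathrm{Spec}(P_n)$ I would do a two-level odd/even partition on a common denominator. By Lemma~\ref{spec},
\[
\mathrm{Spec}(P_{4k+3})=\Bigl\{\,2\cos\tfrac{j\pi}{4k+4}:j=1,\dots,4k+3\,\Bigr\},
\]
and I split the numerators $j$ modulo $4$. Odd values $j=2i-1$ ($i=1,\dots,2k+2$) give $2\cos\tfrac{(2i-1)\pi}{4k+4}$; values $j\equiv 2\pmod 4$ simplify to $2\cos\tfrac{(2i-1)\pi}{2k+2}$ ($i=1,\dots,k+1$); values $j\equiv 0\pmod 4$ simplify to $2\cos\tfrac{j'\pi}{k+1}$ ($j'=1,\dots,k$), which is exactly $\mathrm{Spec}(P_k)$.

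Now apply Theorem~\ref{eigH} with $t=k-1$ and $m=k+1$. Since $t+m+2=2k+2$, the Type~1 eigenvalues of $H_{k-1}^{2k}$ are precisely the odd-$j$ family; since $t+2=k+1$, the Type~2 eigenvalues are precisely the $j\equiv 2\pmod 4$ family. Taking the disjoint union with $\mathrm{Spec}(P_k)$ reassembles all of $\mathrm{Spec}(P_n)$ with the correct multiplicities, giving $\mathrm{Spec}(\Gamma)=\mathrm{Spec}(P_n)$.

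The main hurdle is really just bookkeeping: the parameter choice $(t,m)=(k-1,k+1)$ is forced by requiring Type~1 to have the denominator $4k+4$ and the total vertex count to be $4k+3$, after which the $j\bmod 4$ partition of $\mathrm{Spec}(P_n)$ matches the three pieces of $\mathrm{Spec}(\Gamma)$ automatically. The only delicate point is the boundary case $k=1$ (with $t=0$), which is dispatched by the one-line extension of Theorem~\ref{eigH} mentioned above.
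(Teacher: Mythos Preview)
Your proof is correct and follows essentially the same approach as the paper: the same cospectral mate $H_{k-1}^{2k}\cup P_k$ is exhibited, and the spectrum is matched by splitting the numerators of $\mathrm{Spec}(P_{4k+3})$ by parity. The only cosmetic difference is that the paper handles $k=1$ separately (writing the mate as $H_2\cup P_1$ and checking directly), whereas you fold it into the general case by reading $H_0^{2}$ as $H_2$ and extending Theorem~\ref{eigH} to $t=0$; your three-way split modulo $4$ also makes the final identification with $\mathrm{Spec}(P_k)$ slightly more explicit than the paper's two-step odd/even split.
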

\begin{proof}
Consider graph $\Gamma$ with two components $H_2$ and $P_1$.
It is easy to check that $\Gamma$ is cospectral with $P_7$.
For other cases, we show that a signed graph with two components $H_{k-1}^{2k}$ and $P_k$ is a cospectral mate of $P_{4k+3}$.
\begin{align*}
&\mathrm{Spec}(P_{4k+3})=\{2\mathrm{cos}\dfrac{i\pi}{4k+4}, ~i=1, 2, \ldots, 4k+3\}\\
& =\{2\mathrm{cos}\dfrac{i\pi}{4(k+1)}, ~i=1, 3, \ldots, 4k+3\}\cup \{2\mathrm{cos}\dfrac{j\pi}{4(k+1)}, ~j=2, 4, \ldots, 4k+2\}\\
& =\{2\mathrm{cos}\dfrac{i\pi}{4(k+1)}, ~i=1, 3, \ldots, 4k+3\}\cup \{2\mathrm{cos}\dfrac{j\pi}{2(k+1)}, ~j=1, 2, \ldots, 2k+1\}\\
&=\mathrm{Spec}(H_{k-1}^{2k})\cup \mathrm{Spec}(P_k).
\end{align*}
~\\[-35pt]
\end{proof}
~\\
In Fig.~\ref{counterexamples} ($E_6$ is Graph (m) of Fig.~\ref{1}, and $E_8$ is Graph (o) of Fig.~\ref{1}),
we give signed graphs cospectral with $P_{11}, P_{15}$ and $P_{23}$.
It shows that the presented graphs in Theorem~\ref{odd case2} are in general not unique.

Obviously $P_3$ is determined by its spectrum, so we have the following conclusion.

\begin{cor}\label{3mod4}
Suppose $n\equiv 3 \mod 4$.
Then $P_n$ is determined by its spectrum if and only if $n=3$.
\end{cor}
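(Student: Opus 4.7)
The plan is to deduce the corollary as an essentially immediate consequence of the authors' Theorem~\ref{odd case2}, together with a trivial check for $n=3$.

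For the ``if'' direction, I would verify that $P_3$ is determined by its spectrum directly. The spectrum of $P_3$ is $\{-\sqrt{2},0,\sqrt{2}\}$. Any signed graph $\Gamma$ cospectral with $P_3$ has $3$ vertices, and since the sum of squares of its eigenvalues equals $2\cdot(\text{number of edges})=4$, $\Gamma$ has exactly two edges. A simple graph on three vertices with two edges is necessarily $P_3$, and since the underlying graph is a tree, every signing is switching isomorphic to the all-positive one (switching at any internal vertex flips the sign of incident edges without creating new cycles to obstruct). Thus $\Gamma$ is switching isomorphic to $P_3$.

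For the ``only if'' direction, I would simply invoke Theorem~\ref{odd case2}. Suppose $n\equiv 3\pmod 4$ and $n\neq 3$; then $n=4k+3$ for some integer $k\geq 1$. Theorem~\ref{odd case2} explicitly constructs a signed graph $\Gamma$ cospectral with $P_n$ (namely $H_2\sqcup P_1$ when $n=7$, and $H_{k-1}^{2k}\sqcup P_k$ for larger $k$). Since $\Gamma$ has at least two connected components while $P_n$ is connected, the two cannot be switching isomorphic (the switching operation preserves the number of connected components of the underlying graph up to sign changes only on edges, not on adjacencies). Hence $P_n$ is not determined by its spectrum.

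There is essentially no obstacle here, since Theorem~\ref{odd case2} has already done all of the work of producing and verifying the cospectral mates. The only thing worth being careful about is the observation that a disconnected signed graph cannot be switching isomorphic to a connected one, which follows immediately from the definition of switching (it only alters edge signs, not the underlying graph up to isomorphism).
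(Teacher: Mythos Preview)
Your proposal is correct and follows exactly the paper's approach: the paper simply notes that ``obviously $P_3$ is determined by its spectrum'' and then deduces the corollary from Theorem~\ref{odd case2}, which already asserts that the constructed $\Gamma$ is cospectral \emph{but not switching isomorphic} with $P_n$. You have merely spelled out the $n=3$ verification and the non-isomorphism observation in more detail than the paper does.
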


\begin{figure}[!htb]
\minipage{0.88\textwidth}
\includegraphics[width=\linewidth]{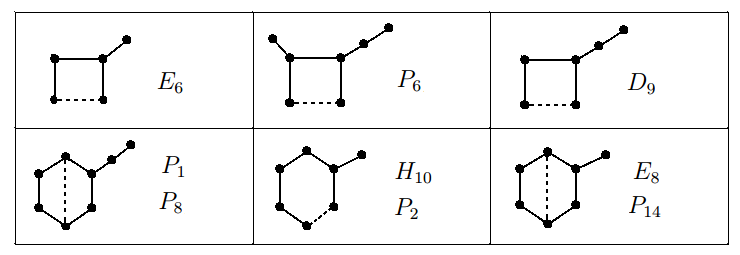}
\caption{Cospectral mates of $P_{11}, P_{13}, P_{15}, P_{17}, P_{23}$, $P_{29}$}\label{counterexamples}
\endminipage
\end{figure}


\section*{Acknowledgment}

The research of the first author was partly funded by Iran National Science Foundation (INSF) under the contract No. 96004167.
\\
We would like to thank an anonymous referee for comments and suggestions.




\begin{thebibliography}{10}

\bibitem{[1]} {\sc B. D. Acharya}, {\em Spectral criterion for cycle balance in networks}, J. Graph Theory 4 (1) (1980) 1--11.

\bibitem{belardo} {\sc F. Belardo}, {\sc P. Petecki}, {\em Spectral characterizations of signed lollipop graphs},
Linear Algebra and its Applications 480 (2015) 144--167.

\bibitem{brou-haem} {\sc A. E. Brouwer}, {\sc W. H. Haemers}, {\em Spectra of graphs}, Springer (2011).

\bibitem{cvet1} {\sc D. M. Cvetkovi\'{c}}, {\sc M. Doob}, {\sc H. Sachs}, {\em Spectra of Graphs - Theory and Application, 3rd edition},
Johann Ambrosius Barth Verlag, Heidelberg–Leipzig, (1995).

\bibitem{cvet} {\sc D. M. Cvetkovi\'c}, {\sc P. Rowlinson}, {\sc S. Simi\'c}, {\em An Introduction to the Theory of Graph Spectra},
Cambridge University Press (2010).

\bibitem{dam-hae} {\sc E. R. van Dam}, {\sc W. H. Haemers}, {\em Which graphs are determined by their spectrum?}
Linear Algebra and its Applications 373 (2003) 241--272.

\bibitem{mckee} {\sc J. McKee}, {\sc C. Smyth}, {\em Integer symmetric matrices having all their eigenvalue
in the interval $[-2, 2]$}, Journal of Algebra 317 (2007) 260--290.

\bibitem{eigtri} {\sc W-C. Yueh}, {\em Eigenvalues of several tridiagonal matrices}, Applied Mathematics E-Notes, 5(2005) 66--74.

\bibitem{zaslavsky1} {\sc T. Zaslavsky}, {\em Signed graphs}, Discrete Appl. Math. 4 (1982) 47--74.
Erratum. Discrete Appl. Math. 5 (1983), 248. MR 84e:05095. Zbl. 503.05060.

\bibitem{zaslavsky2} {\sc T. Zaslavsky}, {\em Matrices in the Theory of Signed Simple Graphs},
in: Advances in Discrete Mathematics and Applications, Mysore, (2008), in: Ramanujan Math. Soc. Lect. Notes Ser., vol. 13, Ramanujan
Math. Soc., (2010) 207--229.

\end{thebibliography}
\end{document}